\newcommand\reallywidecheck[1]{%
\savestack{\tmpbox}{\stretchto{%
  \scaleto{%
    \scalerel*[\widthof{\ensuremath{#1}}]{\kern-.6pt\bigwedge\kern-.6pt}%
    {\rule[-\textheight/2]{1ex}{\textheight}}
  }{\textheight}%
}{0.5ex}}%
\stackon[1pt]{#1}{\scalebox{-1}{\tmpbox}}%
}
\newsavebox{\accentbox}
\newcommand{\compositeaccents}[2]{%
  \sbox\accentbox{$#2$}#1{\usebox\accentbox}}
\newcommand{\gkbbh}[0]{\compositeaccents{\widehat}{\overline{g_{\bar{K}_1}}} \ast \cdots \ast \compositeaccents{\widehat}{\overline{g_{\bar{K}_k}}}}
\def\wb1{w_{B_{\delta^{-1}}}}
\numberwithin{equation}{section}
\newcommand{\F}[0]{\mathbb{Q}_q}
\renewcommand{\O}[0]{\mathbb{Z}_q}
\newcommand{\Z}[0]{\mathbb{Z}}
\newcommand{\R}[0]{\mathbb{R}}
\newcommand{\Q}[0]{\mathbb{Q}}
\newcommand{\C}[0]{\mathbb{C}}
\newcommand{\N}[0]{\mathbb{N}}
\newcommand{\T}[0]{\mathbb{T}}
\newcommand{\Om}[0]{\Omega}
\newcommand{\ta}[0]{\theta}
\newcommand{\vep}[0]{\varepsilon}
\newcommand{\supp}[0]{\operatorname{supp}}
\newcommand{\lsm}[0]{\lesssim}
\newcommand{\wh}[1]{\widehat{#1}}
\newcommand{\st}[1]{\substack{#1}}
\newcommand{\mf}[1]{\mathfrak{#1}}
\newcommand{\nms}[1]{\| #1 \|}
\newcommand{\Mod}[1]{\ (\mathrm{mod}\ #1)}
\definecolor{orange}{rgb}{1,0.5,0}
\newtheorem{thm}{Theorem}[section]
\newtheorem{lemma}[thm]{Lemma}
\newtheorem{prop}[thm]{Proposition}
\newtheorem{cor}[thm]{Corollary}
\theoremstyle{remark}
\title[A decoupling interpretation of an old argument for VMVT]{A decoupling interpretation of an old argument for Vinogradov's Mean Value Theorem}
\author[B. Cook]{Brian Cook} 
\address{Department of Mathematics, Virginia Tech, Blacksburg, VA 24061-0123, USA}
\email{briancookmath@gmail.com}
\author[K. Hughes]{Kevin Hughes}
\address{School of Mathematics, The University of Bristol, Bristol BS8 1UG; and the Heilbronn Insitute for Mathematical Research, Bristol, UK}
\email{khughes.math@gmail.com} 
\author[Z. K. Li]{Zane Kun Li}
\address{Department of Mathematics, North Carolina State University, Raleigh, NC 27695, USA}
\email{zkli@ncsu.edu}
\author[A. Mudgal]{Akshat Mudgal}
\address{Mathematical Institute, University of Oxford, Oxford OX2 6GG, UK}
\email{Akshat.Mudgal@maths.ox.ac.uk}
\author[O. Robert]{Olivier Robert}
\address{Universit\'e de Lyon,  Universit\'e de Saint-\'Etienne, CNRS UMR 5208, Institut Camille Jordan,   F-42000 Saint-\'Etienne, France.}
\email{olivier.robert@univ-st-etienne.fr}
\author[P.-L. Yung]{Po-Lam Yung}
\address{Mathematical Sciences Institute, Australian National University, Canberra ACT 2601, Australia}
\email{PoLam.Yung@anu.edu.au}
\begin{document}
\begin{abstract}
We interpret into decoupling language a refinement of a 1973 argument due to Karatsuba on Vinogradov's mean value theorem. The main goal of our argument is to answer what precisely solution counting in older partial progress on Vinogradov's mean value theorem corresponds to in Fourier decoupling theory.
\end{abstract}
\maketitle
\section{Introduction}
\subsection{Motivation}
Let $s \geq 1$ and $k \geq 2$ be integers. For $X \geq 1$,
let $J_{s, k}(X)$ be the number of solutions to the degree $k$
Vinogradov system in $2s$ variables:
\begin{align} \label{eq:Vinosystem}
x_1^j + x_{2}^{j} + \dots + x_s^j &= y_1^j + y_{2}^{j} + \dots + y_s^j, \quad 1 \leq j \leq k
\end{align}
where all variables $x_1, \ldots, x_s, y_1, \ldots, y_s \in [1, X] \cap \N$.
Nontrivial upper bounds for $J_{s, k}(X)$ were first studied by Vinogradov in 1935 \cite{Vinogradov1935}
and such results are collectively referred to as Vinogradov's Mean Value Theorem (VMVT)
in the literature. The main conjecture in VMVT, now a theorem as of 2015, was that for every $\vep > 0$ and $s,k \in \mathbb{N}$, one has
\begin{equation} \label{eq:VMT}
J_{s,k}(X) \lsm_{s, k, \vep} X^{\varepsilon} (X^s + X^{2s-\frac{k(k+1)}{2}})
\end{equation}
for all $X \geq 1$.
It is not hard to see that $J_{s, k}(X) \gtrsim_{s, k} X^{s} + X^{2s - k(k + 1)/2}$
and applying H\"{o}lder's inequality, we may deduce \eqref{eq:VMT} for all $s \in \mathbb{N}$ from the $s = k(k+1)/2$ case. VMVT plays an important role in understanding Waring's problem and the Riemann zeta function, see for example \cite{F02-zeta, F02-zerofree, HB17, WooleyICM}.
When $k = 2$, the main conjecture in VMVT is classical. In 2014, Wooley \cite{W16} proved the $k = 3$ case of VMVT
using the method of efficient congruencing (see also \cite{HB15} for a shorter proof due to Heath-Brown). In 2015, the $k \geq 2$
case was proven by Bourgain, Demeter, and Guth in \cite{BDG} using Fourier decoupling for the degree $k$ moment curve from which VMVT followed as a corollary.
Finally, in 2017, Wooley \cite{W19}, gave an alternative proof of \eqref{eq:VMT} for all $k \geq 2$ using nested efficient congruencing.

After the proofs of VMVT using the Fourier method of decoupling \cite{BDG} and the number theoretic method of efficient congruencing \cite{W19},
it has been an interesting question to determine how these two methods are related and 
whether a ``dictionary" between the two methods could be obtained. The study of this dictionary has
led to new proofs of Fourier decoupling for the parabola \cite{Li18}, cubic moment curve \cite{GLY19}, and the degree $k$ moment curve \cite{GLYZ}; these having been inspired from the efficient congruencing arguments in \cite[Section 4]{Pierce19}, \cite{HB15}, and \cite{W19}, respectively. Additionally,
a decoupling interpretation of the study of VMVT over ellipspephic sets \cite{Biggs19} led to a proof of Fourier decoupling for fractal sets on the parabola \cite{CDGJLM}.

In this article, we revisit a particular classical VMVT which states that
\begin{equation} \label{eq:VMTold}
J_{s,k}(X) \lsm_{s, k} X^{2s-\frac{k(k+1)}{2} + \frac{1}{2}k^2(1-\frac{1}{k})^{s/k}}
\end{equation}
for all $X \geq 1$ and $s = kl$ with $l \in \N$. 
This result should be compared to the supercritical $s \geq k(k + 1)/2$ case in \eqref{eq:VMT}. For $s$ very large compared to $k$, we have an extra term $\frac{1}{2}k^2(1-\frac{1}{k})^{s/k}$ in the exponent, which decays exponentially in $s$ for every fixed value of $k$, instead of an $\vep$.
The estimate \eqref{eq:VMTold} appears (for example) in Vaughan's book \cite[Chapter 5]{V} and is
a refinement of an argument of Karatsuba \cite{Karatsuba} from 1973 (see also Stechkin \cite{Stechkin} from 1975).
The loss of the $X^{\frac{1}{2}k^2(1 - \frac{1}{k})^{s/k}}$ comes from combining the subcritical estimate $J_{k, k}(X) \lsm_{k} X^k$, which follows from the Newton-Girard identities, along with an iterative argument to derive estimates for $J_{s, k}(X)$
when $s$ is supercritical.

The main purpose of this paper
is to illustrate how this refined argument of Karatsuba can be adapted to give a proof
of a non-sharp Fourier decoupling inequality for the degree $k$ moment curve in the supercritical regime.
The key difficulty that prevents the direct use of ideas from \cite{GLY19, GLYZ, Li18}
is the heavy reliance on solution counting in \eqref{eq:VMTold}.
One of the main points of this article is to clarify the role of such solution counting arguments 
in the study of Fourier decoupling. The mechanism driving the solution counting arguments will allow 
us to prove the key Lemma~\ref{lem:counting} below, which concerns the geometry of Fourier supports of the functions appearing in our main Theorem~\ref{thm:main}.

Since our goal is to clarify the role of solution counting in Fourier decoupling and Bourgain, Demeter, and Guth have
already given the sharpest possible moment curve decoupling theorem in \cite{BDG}, we will work over $\F$ rather than over $\R$.
This will allow us
to present the argument in the cleanest possible manner, free of technical difficulties arising from the inconvenience of the uncertainty principle in $\R^k$.
See also \cite{GLY21} for another decoupling paper that works over $\F$ rather than $\R$, there however, the authors use the observation that decoupling over $\F$ is quantitatively more efficient than decoupling over $\R$ in terms of exponential sum estimates.

\subsection*{Notation} 
As $k$ will be fixed, we will allow all constants to depend on $k$.
Given two positive expressions $X$ and $Y$, we write
$X \lsm Y$ if $X \leq CY$ for some constant $C$ that is allowed to depend on $k$. If $C$ depends on some additional parameter $A$,
then we write $X \lsm_{A} Y$. We write $X \sim Y$ if $X \lsm Y$ and $Y \lsm X$.
By writing $f(x) = O(g(x))$, we mean $|f(x)| \lsm g(x)$. We say that
$f$ has Fourier support in a set $\Om$ if its Fourier transform $\wh{f}$ is supported
in $\Om$.

To prepare the reader for the myriad of intervals that will occur later in 
Sections \ref{mainlemma} and \ref{mainproof}, there will be three
types of interval lengths: intervals named with a ``$K$" will be associated to
the smallest scale $\delta$, intervals named with a ``$J$" will be associated to the
intermediate scale $\nu \approx \delta^{1/k}$, and intervals named with an ``$I$" will be associated to
the largest scale $\kappa \approx \delta^{\vep}$ (though on a first reading, it might be easier to set $\kappa = 1/q$).
Finally, in the context of the decoupling constant $\mathfrak{D}_{p}(\delta)$, defined in \eqref{def:dec_const} below, we call
$p$ subcritical if $p < k(k + 1)$ and $p$ supercritical if $p \geq k(k + 1)$ (rather than the more accurate but slightly more clumsy ``not subcritical").

\subsection{Analysis over $\F$ and decoupling}
Fix a degree $k \geq 2$ and a prime number $q$ with $q > k$. We reserve the letter $p$ for the Lebesgue exponent in the main Theorem~\ref{thm:main}.
We very briefly review the harmonic analysis over $\F$ needed to set up the statement of decoupling. 
See also Section \ref{extraQp} and \cite[Section 2]{GLY21} for further discussion surrounding
the harmonic analysis and basic geometric facts over $\F$ that are useful in decoupling. Additionally
see Chapters 1 and 2 of \cite{Taibleson} and Chapter 1 (in particular Sections 1 and 4) of \cite{VVZ} for a more complete discussion of analysis on $\F$.

The field $\F$ is the completion of $\Q$ under the $q$-adic norm, defined by $|0| = 0$ and $|q^a b/c| = q^{-a}$ if $a \in \Z$, $b, c \in \Z \setminus \{0\}$ and $q$ is relatively prime to both $b$ and $c$. Then $\F$ can be identified (bijectively) with the set of all formal series
\begin{align*}
\F = \Big\{\sum_{j=k}^{\infty} a_j q^j : k \in \Z, a_j \in \{0, 1, \dots, q-1\} \text{ for every $j \geq k$} \Big\},
\end{align*}
and the $q$-adic norm on $\F$ satisfies
$|\sum_{j=k}^{\infty} a_j q^j| = q^{-k}$ if $a_k \ne 0$.
Strictly speaking we should be writing $|\cdot|_{q}$ instead of $|\cdot|$, but we omit this dependence as $q$ is fixed.
The $q$-adic norm on $\Q_q$ induces a norm on $\Q_q^k$, which we denote also by $|\cdot|$ by abuse of notation, via
$|(\xi_1,\dots,\xi_k)| := \max_{1 \leq i \leq k} |\xi_i|.$
Of particular importance is the ultrametric inequality:
$|\xi+\eta| \leq \max\{|\xi|,|\eta|\}$
with equality if $|\xi| \ne |\eta|$.
An interval in $\Q_q$ is then a set of the form $\{\xi \in \Q_q \colon |\xi - a| \leq r\}$, where $a \in \Q_q$ and $r \geq 0$; $r$ will then be called the length of the interval. 
We also will use $|I|$ to denote the length of an interval $I$.
The ring of integers $\O$ coincides with the unit interval $\{\xi \in \Q_q \colon |\xi| \leq 1\}$.
A cube in $\Q_q^k$ of side length $r$ is then a product of $k$ intervals in $\Q_q$ of lengths $r$.
We will work with Schwartz functions defined on $\Q_q^k$ (i.e. finite linear combinations of characteristic functions of cubes in $\Q_q^k$). 
The Fourier transform of such a function $f$ will be given by 
\[
\widehat{f}(\xi) := \int_{\Q_q^k} f(x) \chi(-x \cdot \xi) dx
\]
where $\chi$ is a fixed element in the Pontryagin dual $\widehat{\Q_q}$ of $\Q_q$ that restricts to the principal character on the additive subgroup $\O$ and restricts to a non-principal character on the additive subgroup $q^{-1} \O$, $x \cdot \xi = \sum_{i=1}^k x_i \xi_i$ if $x = (x_1,\dots,x_k)$ and $\xi = (\xi_1,\dots,\xi_k)$, and $dx$ is the Haar measure on the additive group $\Q_q^k$ normalized so that $\int_{\O^k} dx = 1$.
One key property of the Fourier transform that we will use is that $\wh{1_{\O}} = 1_{\O}$, that is, the Fourier transform of the unit ball is the unit ball, see \cite[p.42]{VVZ} for a proof.

We are interested in the unit moment curve
\[
\gamma(t) := (t,t^2,\dots,t^k), \quad |t| \leq 1.
\]
For $\delta \in q^{-\N}$ and any interval $I \subset \Q_q$ with length $\geq \delta$, let $P_{\delta}(I)$ be a partition of $I$ into intervals of length $\delta$. Write $P_{\delta}$ for $P_{\delta}(\O)$.
To each interval $I \subset \O$, one associates a parallelepiped
\[
\theta_I := \Big\{\gamma(a)+\sum_{j=1}^k t_j \gamma^{(j)}(a) \in \Q_q^k \colon |t_j| \leq |I|^j \text{ for all $1 \leq j \leq k$} \Big\}
\]
of dimensions $|I| \times |I|^2 \times \cdots \times |I|^k$ where $a \in I$; this parallelepiped is independent of the choice of $a \in I$. Note that $\bigcup_{K \in P_{\delta}} \theta_K$ is a covering of a $\delta^k$ neighborhood of the unit moment curve (in fact it covers a suitable anisotropic neighborhood of that curve).
One also associates to each $K \in P_{\delta}$ a cube
\begin{align}\label{taukdef}
\tau_K := \{(\xi_1,\dots,\xi_k) \in \Q_q^k \colon |\xi_j-a^j| \leq \delta \text{ for all $1 \leq j \leq k$}\}
\end{align}
of side length $\delta$, where $a \in K$; again this is independent of the choice of $a \in K$. 
Note that for each $K \subset P_{\delta}$,
the ultrametric inequality gives that $\ta_K \subset \tau_K$.

For an interval $I \subset \O$, let $f_{I}$ be defined such that $\wh{f_I} := \wh{f} \cdot 1_{I \times \F^{k-1}}$.
For $p \geq 2$ and $\delta \in q^{-\N}$, let $\mathfrak{D}_p(\delta)$ be the smallest constant such that the inequality
\begin{equation}\label{def:dec_const}
\|f\|_{L^{p}(\F^k)} \leq \mathfrak{D}_p(\delta) (\sum_{K \in P_{\delta}} \|f_K\|_{L^{p}(\F^k)}^2)^{1/2}
\end{equation}
holds for every Schwartz function $f$ on $\Q_q^k$ with its Fourier transform $\wh{f}$ supported on $\bigcup_{K \in P_{\delta}} \theta_K$. Note that $f = \sum_{K \in P_{\delta}} f_K$.
Bourgain, Demeter, and Guth \cite{BDG} showed that
\begin{align}\label{decbdg}
\mathfrak{D}_{p}(\delta) \lsm_{\vep, p, q}\delta^{-\vep}(1 + \delta^{-(\frac{1}{2} - \frac{k(k + 1)}{2p})}),
\end{align}
and this estimate is sharp.
Strictly speaking \cite{BDG} proves a decoupling theorem over $\R$ rather
than over $\F$, but the same proof can be used to derive \eqref{decbdg}.
Choosing $f$ to be a sum of Dirac deltas
immediately implies \eqref{eq:VMT}.

\subsection{The main result}
By interpreting the refinement of Karatsuba's argument for \eqref{eq:VMTold} into decoupling language, our main result is then the following Fourier decoupling analogue of \eqref{eq:VMTold}.
In the same way \eqref{eq:VMTold} is a weaker partial result towards \eqref{eq:VMT}, Theorem \ref{thm:main} and 
Corollary \ref{maincor} should be viewed as the analogous weaker counterpart
of the sharp bound \eqref{decbdg}.

\begin{thm} \label{thm:main}
Let $p_0 \in 2\N$ be an even integer and let $c(p_0) \geq 0$ be such that
\begin{align}\label{p0assumption}
\mathfrak{D}_{p_0}(\delta) \leq C_1 \delta^{-(\frac{1}{2}-\frac{k(k+1)}{2p_0})-\frac{c(p_0) }{p_0}(1-\frac{1}{k})^{p_{0}/(2k)}}  \quad \text{for all $\delta \in q^{-\N}$}
\end{align}
where $C_1$ is independent of $\delta$. 
If $p \in p_0 + 2k\N$ and $0 < \varepsilon < 1$, then
\begin{equation} \label{eq:decbdd}
\mathfrak{D}_p(\delta) \lsm_{p, \vep, C_1}q^{a(p, p_0)/p}\delta^{-(\frac{1}{2}-\frac{k(k+1)}{2p})-\frac{c(p_0) }{p}(1-\frac{1}{k})^{p/(2k)} - \varepsilon} \quad \text{for all $\delta \in q^{-\N}$}
\end{equation}
where
\begin{align}\label{adef}
a(p, p_0) := (\frac{p - p_0}{2k})(\frac{p_0}{2} +\frac{k^2 + 7k - 4}{2}) + \frac{k}{2}(\frac{p - p_0}{2k})(\frac{p - p_0}{2k} + 1).
\end{align}
\end{thm}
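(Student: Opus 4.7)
The plan is to prove \eqref{eq:decbdd} by induction on the nonnegative integer $\ell := (p - p_0)/(2k)$. The base case $\ell = 0$ is immediate from the hypothesis \eqref{p0assumption}, noting that $a(p_0, p_0) = 0$. For the inductive step I would reduce the bound for $\mathfrak{D}_p(\delta)$ to a bound for $\mathfrak{D}_{p-2k}$ at a larger, rescaled scale; iterating $\ell$ times will then produce the geometric factor $(1-1/k)^{\ell}$ in the correction exponent (so that combining with the base-case $(1-1/k)^{p_0/(2k)}$ yields $(1-1/k)^{p/(2k)}$) and the quadratic-in-$\ell$ loss $\tfrac{k}{2}\ell(\ell+1)$ in the exponent of $q$.

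For the inductive step, let $f$ be Schwartz with Fourier support on $\bigcup_{K \in P_\delta} \theta_K$. Since $p$ is even I would start from
\[
\|f\|_{L^p(\F^k)}^p = \int_{\F^k} |f|^{p-2k}\cdot |f|^{2k}\, dx,
\]
and decompose $f = \sum_{J \in P_{\nu}} f_J$ at the intermediate scale $\nu \approx \delta^{1/k}$. Expanding $|f|^{2k}$ into ordered $2k$-tuples of $J$'s, the ultrametric inequality places each term's Fourier support inside $\sum_{i=1}^k \tau_{J_i} - \sum_{i=1}^k \tau_{J_i'}$. The key geometric input is Lemma~\ref{lem:counting}, a Newton--Girard-style rigidity statement over $\F$ which ensures that two such sums can overlap only when $\{J_1,\dots,J_k\} = \{J_1',\dots,J_k'\}$ as multisets. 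This is the decoupling-side translation of the classical identity $J_{k,k}(X) \lsm k!\, X^k$ driving Karatsuba's argument.

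Using this rigidity I would extract a pointwise estimate of essentially the form
\[
|f|^{2k} \lsm k! \sum_{\{J_1,\ldots,J_k\}} \prod_{i=1}^k |f_{J_i}|^2
\]
(plus diagonal contributions that are absorbed), plug it back into the integral, apply H\"older or an $\ell^p$--$\ell^2$ inequality to push the sum outside, and rescale each interval $J$ via the affine change of variables adapted to the moment curve. Over $\F$ this rescaling is exact (no uncertainty-principle loss), and replaces the decoupling scale $\delta$ by the relative scale $\delta/\nu = \delta^{(k-1)/k}$. Applying the inductive hypothesis at exponent $p-2k$ and scale $\delta^{(k-1)/k}$ then produces exactly one additional factor of $(1-1/k)$ in the correction exponent, together with a fresh $q$-loss whose size grows linearly with the iteration index.

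The main obstacle I expect is the combinatorial and scale bookkeeping: choosing the rounded $\nu$ so that the iteration yields the factor $(1-1/k)$ cleanly at every stage; verifying Lemma~\ref{lem:counting} with the correct multiplicity (including degenerate cases where some $J_i$'s coincide, and the role of the coarsest scale $\kappa \approx \delta^\vep$ in absorbing $\delta^{-\vep}$ losses); and tracking the $q$-constants so that the linear-in-$\ell$ contribution $\ell(\tfrac{p_0}{2}+\tfrac{k^2+7k-4}{2})$ and the quadratic contribution $\tfrac{k}{2}\ell(\ell+1)$ in $a(p,p_0)$ come out exactly as stated. The quadratic term should arise because at iteration $j$ the rescaling contributes a $q$-factor linear in $j$, so summing from $1$ to $\ell$ produces $\ell(\ell+1)/2$.
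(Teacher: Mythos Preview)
Your proposal has a genuine gap at the step where you claim a pointwise estimate of the form
\[
|f|^{2k} \lsm k!\sum_{\{J_1,\dots,J_k\}} \prod_{i=1}^k |f_{J_i}|^2.
\]
This is false in general and cannot be extracted from Lemma~\ref{lem:counting} or any Newton--Girard rigidity. The rigidity is a statement about Fourier supports: it tells you certain terms in the expansion of $\int |f|^{2k}\cdot(\text{something})$ vanish because $0$ does not lie in the support of their Fourier transform. That is an \emph{integral} orthogonality, not a pointwise one. At a point where all $f_J$ have the same phase, $|f|^{2k}$ is of size $N^{2k}\max_J|f_J|^{2k}$ while your right-hand side is only $N^{k}\max_J|f_J|^{2k}$. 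Moreover, the Newton--Girard argument behind Lemma~\ref{lem:counting} requires the $k$ intervals to lie in $\kappa$-separated parents $I_1,\dots,I_k$ (this is exactly where the lower bound $(q\kappa)^{k-1}$ enters in \eqref{difftarget}); without that separation the conclusion fails, since nearby $J,J'$ produce overlapping sums of $\tau$'s.

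In the paper this separation is manufactured by the broad/narrow dichotomy of Section~\ref{broadnarrow}, which you omit: the narrow part contributes the induction-on-scales term $\mathfrak{D}_p(\delta/\kappa)$, and only the broad part sees transverse $I_1,\dots,I_k$ to which Lemma~\ref{lem:counting} can be applied (at the \emph{finest} scale $\delta$, not at scale $\nu$). Even then, the output of Lemma~\ref{lem:main} is not the decoupled $\ell^2L^p$ norm but the asymmetric product $\max_K\|g_K\|_\infty^k\,(\sum_{\bar K}\|g_{\bar K}\|_\infty)^k\,\max_J(\sum_{K'\subset J}\|g_{K'}\|_{p-2k}^2)^{(p-2k)/2}$; closing the induction for general $f$ (as opposed to the exponential sum behind \eqref{eq:VMTold}) requires the dyadic pigeonholing of Section~\ref{pigeon} to restore the uniformity you are implicitly assuming when you ``push the sum outside and rescale''. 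Your treatment of $\kappa$ as merely a device for absorbing $\delta^{-\vep}$ losses misses its role as the source of transversality.
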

Since $\mf{D}_{p}(\delta) \geq 1$ for all $p$, \eqref{p0assumption}
implies that $c(p_0)$, $k$, and $p_0$ are such that
\begin{align}\label{positiveexp}
\frac{1}{2} - \frac{k(k + 1)}{2 p_0} + \frac{c(p_0)}{p_0}(1 - \frac{1}{k})^{\frac{p_0}{2k}} \geq 0.
\end{align}
It is also known that $\mathfrak{D}_{2k}(\delta) \lsm_{\vep} \delta^{-\vep}$ for any $\vep > 0$, see for example \cite[Exercise 11.19]{Demeter-book} for the Euclidean case; we provide a proof for the case over $\F$ in the appendix for the convenience of the reader. We also remark that \cite{HW} proved, in the case of local fields, a related square function estimate with a bound independent of $\delta$ if the $f_K$'s are Fourier supported in a $\delta^{k}$ neighborhood of $\gamma(K)$; see also \cite{GGPRY} and \cite{BBH} for similar estimates.
Choosing $p_0 = 2k$ and $c(p_0) = k^{2}/2 + \vep$ for any $\vep > 0$ in applying Theorem~\ref{thm:main} we obtain:
\begin{cor}\label{maincor}
Let $p \in 2k\N$ and $0 < \vep < 1$. Then
\begin{align*}
\mathfrak{D}_{p}(\delta) \lsm_{p, \vep}q^{O(k + p/k)}\delta^{-(\frac{1}{2} - \frac{k(k + 1)}{2p}) - \frac{k^2}{2p}(1 - \frac{1}{k})^{p/(2k)} - \vep} \quad \text{for all $\delta \in q^{-\N}$}
\end{align*}
where the implied constant in the exponent of $q$ is absolute (and independent of $k$).
\end{cor}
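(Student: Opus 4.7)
The plan is to derive Corollary \ref{maincor} as an essentially bookkeeping application of Theorem \ref{thm:main}: I will take $p_0 = 2k$ and $c(p_0) = k^2/2 + \eta$ for an $\eta > 0$ to be chosen depending on $\vep$ and $p$, and use as the input bound for \eqref{p0assumption} the estimate $\mathfrak{D}_{2k}(\delta) \lsm_\vep \delta^{-\vep}$ proved in the appendix.

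First I would verify that \eqref{p0assumption} holds for this choice. Substituting $p_0 = 2k$, the exponent of $\delta$ on the right-hand side of \eqref{p0assumption} simplifies to
$$-\Big(\tfrac{1}{2}-\tfrac{k+1}{4}\Big) - \tfrac{k^2/2 + \eta}{2k}\cdot\tfrac{k-1}{k} = \tfrac{k-1}{4} - \tfrac{k-1}{4} - \tfrac{\eta(k-1)}{2k^2} = -\tfrac{\eta(k-1)}{2k^2};$$
the leading constant $k^2/2$ in $c(p_0)$ is chosen precisely to force the two $\tfrac{k-1}{4}$'s to cancel. Thus \eqref{p0assumption} reduces to $\mathfrak{D}_{2k}(\delta) \leq C_1 \delta^{-\eta(k-1)/(2k^2)}$, which follows from the appendix bound with $C_1 = C_1(k, q, \eta)$. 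The necessary condition \eqref{positiveexp} is then automatic, as its left-hand side equals $\eta(k-1)/(2k^2) \geq 0$.

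Next, for $p \in 2k + 2k\N$, I would apply Theorem \ref{thm:main} with the above $p_0, c(p_0), C_1$ and a small parameter $\vep' > 0$ to be chosen. Using $0 \leq (1-1/k)^{p/(2k)} \leq 1$, the theorem's exponent satisfies
$$\tfrac{c(p_0)}{p}(1-1/k)^{p/(2k)} + \vep' \leq \tfrac{k^2}{2p}(1-1/k)^{p/(2k)} + \tfrac{\eta}{p} + \vep',$$
so the choices $\eta := \vep p / 2$ and $\vep' := \vep / 2$ produce exactly the corollary's target exponent. For the $q$-factor, substituting $p_0 = 2k$ into \eqref{adef} yields $a(p, 2k) = \tfrac{(p-2k)(k^2+9k-4)}{4k} + \tfrac{(p-2k)p}{8k}$, and consequently $a(p, 2k)/p \leq \tfrac{k^2+9k-4}{4k} + \tfrac{p}{8k} = O(k + p/k)$ with an absolute implied constant, as required.

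Finally, the endpoint $p = 2k$ lies outside $p_0 + 2k\N$ under the convention $\N = \{1, 2, \ldots\}$, so I would handle it directly. At $p = 2k$ the non-$\vep$ part of the target exponent simplifies to $-(\tfrac{1}{2} - \tfrac{k+1}{4}) - \tfrac{k}{4}(1-\tfrac{1}{k}) = 0$, so the corollary reduces at this endpoint to $\mathfrak{D}_{2k}(\delta) \lsm_\vep q^{O(k)}\delta^{-\vep}$, which is exactly the appendix estimate. There is no real obstacle here: the two algebraic cancellations (in verifying the hypothesis and at $p = p_0$) are both forced by the calibration of $c(p_0) = k^2/2 + \eta$, and the proof is pure unpacking of the theorem.
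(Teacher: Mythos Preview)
Your proposal is correct and takes essentially the same approach as the paper, which simply says ``Choosing $p_0 = 2k$ and $c(p_0) = k^{2}/2 + \vep$ for any $\vep > 0$ in applying Theorem~\ref{thm:main} we obtain'' the corollary. Your version is more explicit---you verify the hypothesis \eqref{p0assumption} via the cancellation $\tfrac{k-1}{4}-\tfrac{k-1}{4}$, compute $a(p,2k)/p = O(k+p/k)$, and separately treat the endpoint $p=2k$ (which the paper glosses over)---but the substance is identical.
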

The exponent of $q$ in Corollary \ref{maincor} is more precisely $\frac{a(p, 2k)}{p} = (\frac{1}{2k} - \frac{1}{p})\frac{k^2 + 9k - 4}{2} + \frac{1}{4}(\frac{p}{2k} - 1)$, but we opt to write it as above since it more clearly illustrates what the main terms are.
Note that the hypothesis in Theorem~\ref{thm:main} is always satisfied if $p_0$ is any fixed exponent $\geq 2$ and $c(p_0)$ is chosen large enough.
One can view Theorem~\ref{thm:main} as a way of upgrading trivial $l^{2}L^{p_0}$ decoupling at say some subcritical $p$ to $l^{2}L^p$ decoupling for all large $p$ with only a loss that decreases exponentially as $p \to +\infty$. 
Of course, if one already knew the sharp estimate in the critical $p_0 = k(k + 1)$ case,
then Theorem \ref{thm:main} implies that we know the sharp decoupling estimate for all $p \in k(k + 1) + 2k\N$.
However this already follows from interpolating the critical estimate with the trivial $l^{2}L^{\infty}$
decoupling estimate.

Though Corollary \ref{maincor} implies \eqref{eq:VMTold} with an extra $X^{\vep}$ that comes from the $\delta^{-\vep}$ factor in Corollary \ref{maincor}, 
Corollary \ref{maincor} is more general and this 
extra $\delta^{-\vep}$ term comes from needing some additional uniformity 
in the case of the general $f$ Fourier supported in $\bigcup_{K \in P_{\delta}}\ta_K$ and
an application of the broad-narrow argument to get around the use of the Prime Number Theorem in the proof of \eqref{eq:VMTold} (see Section \ref{broadnarrow}).
See Sections \ref{interpret14} and \ref{pigeon} for some more discussion comparing
the VMVT case and the general $f$ decoupling case.

We end with some discussion about how the proof of Corollary \ref{maincor} (and Theorem \ref{thm:main}) contrasts with modern decoupling
proofs of degree $k$ moment curve decoupling \cite{BDG, GLYZ} which prove \eqref{decbdg}.
Unlike the argument in \cite{BDG, GLYZ}, we are missing
any lower dimensional decoupling input and while we do use induction on scales, the iteration itself is unique in that it iterates on the $p$
in $l^{2}L^{p}$ decoupling.
Schematically, the iteration to prove Theorem \ref{thm:main} controls $l^{2}L^{p}$ decoupling by $l^{2}L^{p - 2k}$ decoupling at a larger scale.
After $O(p/k)$ steps, we are reduced to $l^{2}L^{2k}$ decoupling for the degree $k$ moment curve which follows (essentially) from
the Newton-Girard identities.
The iteration is surprisingly efficient when it controls $l^{2}L^{p}$ decoupling by $l^{2}L^{p - 2k}$ decoupling as long
as both $p$ and $p - 2k$ are supercritical. However after about $\frac{1}{2k}(p - \frac{k(k + 1)}{2})$ steps, we enter the subcritical
regime for which the iteration becomes inefficient and this is why we accrue an additional $\delta^{-\frac{k^2}{2p}(1 - \frac{1}{k})^{p/(2k)}}$ term.
When $k = 2$, the argument for Corollary \ref{maincor} uses $O(p)$ steps to prove a weak non-sharp $l^{2}L^{p}$ decoupling estimate. 
This is to be compared to the modern proof of decoupling for the parabola where to prove the sharp critical $l^{2}L^{6}$ decoupling,
one uses $O(\vep^{-1})$ many steps (see for example the proof of \cite[Lemma 2.12]{Li18}).
In the harmonic analysis literature, iterating on $p$ is not a new idea as such an argument was already used by Drury \cite{Drury} to prove cubic moment curve restriction,
though we believe this is the first time such an argument has appeared in the decoupling literature.
See also \cite{AM} by the fourth author for a similar idea in the additive combinatorics literature which was recently used to obtain
diameter free estimates for the quadratic VMVT.

Additionally, at each iterative step, three scales are key: the smallest scale $\delta$, the intermediate scale $\delta^{1/k}$, and the largest
scale $1$ (though strictly speaking in our proof the largest scale is actually $\delta^{\vep}$ rather than 1 for technical reasons). This can be compared
to \cite{BDG, GLYZ} which uses scales $\delta$, $\delta^{\vep}$ and $1$.

This paper is organized as follows: In Section \ref{extraQp}, we review some basic geometric and harmonic analysis facts in $\F$ that will be used throughout this paper. In Section \ref{sec:karatsuba}, we review the refinement of the 1973 argument of Karatsuba at a high level. In Section \ref{mainlemma}, we prove Lemma \ref{lem:main} which is the main lemma that is used to prove Theorem \ref{thm:main}. This is accomplished via combining a standard broad-narrow argument in Section \ref{broadnarrow} and some geometric properties of the moment curve that use the Newton-Girard identites, see Lemma \ref{lem:counting}. In Section \ref{mainproof}, we dyadically pigeonhole to obtain some uniformity in our estimates and prove Theorem \ref{thm:main} and Corollary \ref{maincor}. Finally, in the appendix, we include a proof of $\mathfrak{D}_{2k}(\delta) \lsm_{\vep} \delta^{-\vep}$ for completeness.

\subsection*{Acknowledgements}
This question was first posed to the third and sixth author by Shaoming Guo when the third author was visiting the Department of Mathematics at the Chinese University of Hong Kong in July 2019. This question was posed again by Shaoming Guo during a problem session at the Arithmetic (and) Harmonic Analysis workshop held (virtually) at the Mittag-Leffler Institute in early June 2021 and this current collaboration arose from that particular workshop.

KH is supported by the Additional Funding Programme for Mathematical Sciences, delivered by EPSRC (EP/V521917/1) and the Heilbronn Institute for Mathematical Research, 
ZL is supported by NSF grant DMS-1902763, 
AM is supported by Ben Green’s Simons Investigator Grant, ID 376201, 
OR is supported by the joint FWF-ANR project Arithrand: FWF: I 4945-N and ANR-20-CE91-0006, 
and P-L.Y is supported by a Future Fellowship FT20010039 from the Australian Research Council.
ZL would also like to thank the National Center for Theoretical Sciences (NCTS) in Taipei, Taiwan for their kind hospitality during his visit, where part of this work was written. We also acknowledge kind support from the American Institute of Mathematics through the Fourier restriction research community.

\section{Wavepacket decomposition and some basic geometric facts}\label{extraQp}
Throughout this paper, we will make use of wavepacket decomposition which
allows us to decompose a function $f$, which is Fourier supported in some
$\ta_K$, into linear combinations of indicator functions of translates of the parallelpiped ``dual" to $\ta_K$.
That the $q$-adic character $\chi$ is trivial on $\O$ gives
a much cleaner wavepacket decomposition when working over $\F$ than over $\R$.
See \cite[Section 3]{Tao-Recent} or \cite[Section 2.4]{Guth-325} for some discussion about wavepacket
decomposition over $\R$ in the context of the paraboloid
(though the same ideas apply for the degree $k$ moment curve).


Fix $\delta \in q^{-\N}$. It will be convenient to introduce the shorthand
\[
\theta_{\delta} := \delta \O \times \delta^2 \O \times \dots \times \delta^k \O
\]
and 
\[
T_{\delta} := \delta^{-1} \O \times \delta^{-2} \O \times \dots \times \delta^{-k} \O.
\]
They are dual to each other in the sense that
\[
T_{\delta} = \{x \in \Q_q^k \colon |x \cdot \xi| \leq 1 \text{ for all $\xi \in \theta_{\delta}$}\}.
\]
Since for any $1 \leq j \leq k$, any interval in $\Q_q$ of length $\delta^{j}$ is the disjoint union of $\delta^{-(k-j)}$ many intervals of length $\delta^{k}$, it follows that $\theta_{\delta}$ is the disjoint union of $\delta^{-\frac{k(k-1)}{2}}$ many cubes of side lengths $\delta^k$ in $\Q_q^k$. Similarly, any cube in $\Q_q^k$ of side length $\delta^{-k}$ is a disjoint union of $\delta^{-\frac{k(k-1)}{2}}$ many translates of $T_{\delta}$.

Now for $a \in \O$, let $M_{a}$ be the $k \times k$ lower-triangular matrix given by
\[
M_a = (\gamma'(a)\,\, \gamma''(a)\,\, \cdots\,\, \gamma^{(k)}(a))
\]
where we view $\gamma^{(j)}(a)$ as a column vector. Then for any $K \in P_{\delta}$, we have
\begin{equation} \label{eq:thetaK_Ma}
\theta_K = \gamma(a) + M_a \theta_{\delta}
\end{equation}
for any $a \in K$.
In fact, the right hand side is independent of $a \in K$ since if $b \in K$, then 
\[
\gamma(b) = \gamma(a) + \sum_{j=1}^k (j!)^{-1} \gamma^{(j)}(a)(b-a)^j \in \gamma(a) + M_a \theta_{\delta},
\]
and
\begin{equation} \label{eq:Ma_to_Mb}
M_a = M_b 
\left(
\begin{array}{cccc}
1 & 0 & \dots & 0 \\
(1!)^{-1}(b-a) & 1 & \dots & 0 \\
(2!)^{-1}(b-a)^2 & (1!)^{-1}(b-a) & \dots & 0 \\
\vdots & & \ddots &\\
((k-1)!)^{-1}(b-a)^{k-1} & ((k-2)!)^{-1}(b-a)^{k-2} & \dots & 1
\end{array}
\right)
\end{equation}
where the second matrix on the right hand side preserves $\theta_{\delta} = \delta \O \times \delta^2 \O \times \dots \times \delta^k \O$ (here we have used the fact that $|k!|=1$ in $\Q_q$ since $q > k$). 

For $K \in P_{\delta}$ and any $a \in K$, let $T_{0, K}$ be the dual parallelepiped to $\theta_K$ centered at the origin given by
\begin{align*}
    T_{0, K} = \{x \in \Q_q^k \colon |x \cdot (\xi-\gamma(a))|&\leq 1 \text{ for all $\xi \in \theta_K$}\}. 
\end{align*}
Using \eqref{eq:thetaK_Ma}, it is not hard to see that
\begin{align*}
T_{0, K} &= \{x \in \Q_q^k \colon |x \cdot \gamma^{(j)}(a)| \leq \delta^{-j} \text{ for all $1 \leq j \leq k$}\} \\
&= \{x \in \Q_q^k \colon M_a^{T} x \in T_{\delta} \} = M_{a}^{-T} T_{\delta}
\end{align*}
for any $a \in K$.
This parallelepiped depends only on $K$ but not on the choice of $a \in K$, since \eqref{eq:Ma_to_Mb} shows that
\[
M_a^{-T} = M_b^{-T} \left(
\begin{array}{ccccc}
1 & O(\delta) & O(\delta^2) & \dots & O(\delta^{k-1}) \\
0 & 1 & O(\delta) & \dots & O(\delta^{k-2}) \\
0 & 0 & 1 & \dots & O(\delta^{k-3}) \\
\vdots & & & \ddots &\\
0 & 0 & 0 & \dots & 1
\end{array}
\right),
\]
where $O(\delta^j)$ is some number in $\Q_q$ with norm $\leq \delta^j$, and the second matrix on the right hand side is a bijection that preserves $T_{\delta}$ by the ultrametric inequality.

\begin{lemma} \label{lem:geometry}
Let $\delta \in q^{-\N}$ and fix $K \in P_{\delta}$. 
Then 
\begin{enumerate}[(i)] 
\item \label{lem:geometryi} $\theta_K-\theta_K$ is the disjoint union of $\delta^{-\frac{k(k-1)}{2}}$ cubes of side lengths $\delta^{k}$, and
\item \label{lem:geometryii} every cube of side length $\delta^{-k}$ in $\Q_q^k$ is the disjoint union of $\delta^{-\frac{k(k-1)}{2}}$ many translates of $T_{0, K}$.
\end{enumerate}
\end{lemma}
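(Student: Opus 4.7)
The plan is to reduce both parts to a single algebraic observation: for every $a \in K \subset \O$, the matrix $M_a$ lies in $\mathrm{GL}_k(\O)$, i.e., both $M_a$ and $M_a^{-1}$ have entries in $\O$. To verify this, recall that $M_a$ is lower triangular with $(i,j)$-entry equal to $\frac{i!}{(i-j)!}a^{i-j}$ for $i \geq j$; each such entry lies in $\O$ since $|a| \leq 1$, and each diagonal entry $j!$ is a unit in $\O$ because $q > k$. Consequently $\det M_a = \prod_{j=1}^{k} j!$ is a unit, so $M_a, M_a^{-1}, M_a^{T}, M_a^{-T}$ all have entries in $\O$. In particular each of these matrices defines an $\O$-linear bijection of the free $\O$-module $(r\O)^k$ for every $r \in q^{\Z}$; this is the only non-trivial input in what follows.

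For part (i), I would use the identity $\theta_K = \gamma(a) + M_a \theta_\delta$ from the excerpt, together with the fact that $\theta_\delta = \delta \O \times \delta^2 \O \times \cdots \times \delta^k \O$ is an additive subgroup of $\F^k$. This gives
\[
\theta_K - \theta_K = M_a(\theta_\delta - \theta_\delta) = M_a \theta_\delta.
\]
The excerpt already records that $\theta_\delta$ is the disjoint union of $\delta^{-k(k-1)/2}$ cubes of side length $\delta^k$; group-theoretically this is the decomposition of $\theta_\delta$ into cosets of the subgroup $(\delta^k \O)^k \subset \theta_\delta$. Applying the $\O$-linear bijection $M_a$, which sends $(\delta^k \O)^k$ to itself, transports this decomposition to one of $M_a \theta_\delta$ into $\delta^{-k(k-1)/2}$ translates of $(\delta^k \O)^k$, i.e., into cubes of side length $\delta^k$.

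For part (ii), I would start from the identity $T_{0,K} = M_a^{-T} T_\delta$ together with the excerpt's observation that $(\delta^{-k}\O)^k$ is a disjoint union $(\delta^{-k}\O)^k = \bigsqcup_\alpha (y_\alpha + T_\delta)$ of $\delta^{-k(k-1)/2}$ translates of $T_\delta$. Applying the bijection $M_a^{-T}$ of $(\delta^{-k}\O)^k$ (noting also that $M_a^{-T}$ fixes $(\delta^{-k}\O)^k$ setwise) yields
\[
(\delta^{-k}\O)^k = \bigsqcup_\alpha (M_a^{-T} y_\alpha + T_{0,K}),
\]
displaying $(\delta^{-k}\O)^k$ as a disjoint union of $\delta^{-k(k-1)/2}$ translates of $T_{0,K}$. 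A final translation extends this decomposition to an arbitrary cube of side length $\delta^{-k}$ in $\F^k$.

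The main point requiring care is the verification that $M_a \in \mathrm{GL}_k(\O)$; once it is in hand, both statements follow by transporting the already-available decompositions of $\theta_\delta$ and $T_\delta$ through $M_a$ or $M_a^{-T}$, with no additional geometric input beyond the ultrametric good behaviour of cubes under integral invertible matrices. Thus the lemma is essentially a bookkeeping consequence of the structural facts assembled earlier in the section rather than an independent geometric result.
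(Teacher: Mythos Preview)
Your proposal is correct and follows essentially the same approach as the paper: both arguments transport the already-known decompositions of $\theta_\delta$ and of a cube of side length $\delta^{-k}$ through the maps $M_a$ and $M_a^{-T}$ respectively, using that these are bijections preserving cubes of the relevant side lengths. Your version is somewhat more explicit in justifying this last point via $M_a \in \mathrm{GL}_k(\O)$, but the substance is the same.
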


\begin{proof}
\begin{enumerate}[(i)]
    \item Recall that $\theta_{\delta}$ is the disjoint union of $\delta^{-\frac{k(k-1)}{2}}$ cubes of side lengths $\delta^k$. Since $M_a$ is a bijection that maps cubes of side length $\delta^k$ to cubes of side length $\delta^k$ for any $a \in K$, and $\theta_K-\theta_K = M_a \theta_{\delta}$ for any $a \in K$, the assertion follows.
    Note that $\ta_K - \ta_K$ is just a translation of $\ta_K$ to the origin.
    \item Recall that any cube in $\Q_q^k$ of side length $\delta^{-k}$ is a disjoint union of $\delta^{-\frac{k(k-1)}{2}}$ many translates of $T_{\delta}$. Since $M_a^{-T}$ is a bijection that maps cubes of side length $\delta^{-k}$ to cubes of side length $\delta^{-k}$ for any $a \in K$, and $T_{0, K} = M_a^{-T} T_{\delta}$ for any $a \in K$, the assertion follows.
\end{enumerate}
\end{proof}

From Lemma~\ref{lem:geometry}\eqref{lem:geometryii}, we may deduce that translates of $T_{0, K}$ tile $\Q_q^k$; we denote the collection of such translates by $\mathbb{T}(K)$.
We are now ready to state the version of wavepacket decomposition that we will use.
\begin{lemma}[Wavepacket decomposition]\label{wavepacket}
Let $\delta \in q^{-\N}$ and fix $K \in P_{\delta}$.  	
Let $g$ be a Schwartz function with Fourier transform supported in $\ta_K$.
Then $|g|$ is constant on every $T \in \T(K)$, and $\widehat{g 1_T}$ is supported on $\theta_K$ for every $T \in \T(K)$. Hence it is natural to write
\begin{align}\label{wavedecomp}
g = \sum_{T \in \T(K)} g 1_T,
\end{align}
where each term $g1_{T}$ (which we will call a ``wavepacket") is Fourier supported on $\theta_K$ and has constant modulus on every $T \in \T(K)$. 
It also follows that if $\mathcal{T}$ is any subset of $\T(K)$, then $\sum_{T \in \mathcal{T}} g 1_T$ is Fourier supported in $\theta_K$.
\end{lemma}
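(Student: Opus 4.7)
My plan is to prove the two key assertions by exploiting the parameterization $\theta_K=\gamma(a)+M_a\theta_\delta$ from \eqref{eq:thetaK_Ma} together with the duality between $\theta_\delta$ and $T_\delta$; two changes of variables (one on the Fourier side, one on the spatial side) will reduce everything to two ultrametric facts. Throughout, I would use that $|\det M_a|=1$ in $\Q_q$, since $q>k$ forces $|j!|=1$ for $j\le k$, so that changes of variables by $M_a$ or $M_a^T$ preserve Haar measure.

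For the modulus constancy of $g$ on each $T\in\T(K)$, I would Fourier invert and substitute $\xi=\gamma(a)+M_a v$ with $v\in\theta_\delta$ to obtain
\[
g(x)=\chi(x\cdot\gamma(a))\int_{\theta_\delta}\widehat{g}(\gamma(a)+M_a v)\,\chi\bigl((M_a^T x)\cdot v\bigr)\,dv.
\]
Whenever $x'-x\in T_{0,K}=M_a^{-T}T_\delta$, the vector $M_a^T(x'-x)$ lies in $T_\delta$, and for every $v\in\theta_\delta$ the ultrametric inequality gives $|(M_a^T(x'-x))\cdot v|\le \max_j \delta^{-j}\cdot\delta^j=1$. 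Since $\chi$ is trivial on $\O$, this character equals $1$, so the integral is unchanged when $x$ is replaced by $x'$. Hence $g(x')=\chi((x'-x)\cdot\gamma(a))\,g(x)$, and $|g(x')|=|g(x)|$.

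For the Fourier support of $g1_T$, I would first compute $\widehat{1_{T_\delta}}=\delta^{-k(k+1)/2}1_{\theta_\delta}$ using $\widehat{1_\O}=1_\O$, scaling, and the tensor product structure of $T_\delta$. A change of variables $y=M_a^T x$ then gives $\widehat{1_{T_{0,K}}}=\delta^{-k(k+1)/2}1_{M_a\theta_\delta}$, whose support is $M_a\theta_\delta=\theta_K-\gamma(a)$; translating $T_{0,K}$ by an $x_0$ to form a general $T\in\T(K)$ only introduces a phase, so $\widehat{1_T}$ is also supported in $\theta_K-\gamma(a)$. Since $\widehat{g1_T}=\widehat{g}\ast\widehat{1_T}$, its support lies in
\[
\theta_K+(\theta_K-\gamma(a))=\gamma(a)+M_a(\theta_\delta+\theta_\delta)=\gamma(a)+M_a\theta_\delta=\theta_K,
\]
where the middle equality uses the ultrametric identity $\theta_\delta+\theta_\delta=\theta_\delta$. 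The decomposition \eqref{wavedecomp} is then immediate from the tiling of $\Q_q^k$ by elements of $\T(K)$ provided by Lemma~\ref{lem:geometry}\eqref{lem:geometryii}, and the final assertion about partial sums follows at once from linearity of the Fourier transform.

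The hard part, conceptually, is recognizing that $T_{0,K}$ has been engineered as precisely the dual scale on which the character $\chi(x\cdot\xi)$ cannot distinguish points as $\xi$ ranges over $\theta_K$; once this is believed, both assertions collapse to the two ultrametric identities $|u\cdot v|\le 1$ for $u\in T_\delta$, $v\in\theta_\delta$ and $\theta_\delta+\theta_\delta=\theta_\delta$. These identities are what make the wavepacket decomposition an exact equality rather than an approximation, in stark contrast to the real setting.
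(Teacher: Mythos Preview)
Your proof is correct and follows essentially the same approach as the paper's: both arguments Fourier invert and use the parameterization $\xi=\gamma(a)+M_a v$ to reduce the modulus constancy to $\chi$ being trivial on $\O$, and both compute $\widehat{1_{T_{0,K}}}$ via the change of variables $y=M_a^T x$ to handle the Fourier support of $g1_T$. Your presentation is slightly more streamlined in that you make the role of $|\det M_a|=1$ explicit and phrase the support as $\theta_K-\gamma(a)$ rather than $\theta_K-\theta_K$ (these are the same set, namely $M_a\theta_\delta$), but the substance is identical.
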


\begin{proof}
First, to prove that $|g|$ is constant on any translates of $T_{0, K}$, one only needs to prove the case when $\delta = 1$, $K = \Z_{q}$, and then apply a change of variables, but we opt for a more explicit proof.
We will show that $|g(x)|$ is constant for all $x \in A + T_{0, K}$ for any
$A \in \Q_{q}^{k}$. By Fourier inversion we have that
\begin{align*}
|g(x)| &= |\int_{\ta_K}\wh{g}(\xi)\chi(\xi \cdot x)\, d\xi|\\
&= |\int_{|t_1| \leq \delta, \ldots, |t_k| \leq \delta^{k}}\wh{g}(\gamma(a) + \sum_{j =1 }^{k}t_{j}\gamma^{(j)}(a) )\chi([\gamma(a) + \sum_{j =1 }^{k}t_{j}\gamma^{(j)}(a)] \cdot x)\, dt|\\
&= |\int_{|t_1| \leq \delta, \ldots, |t_k| \leq \delta^k}\wh{g}(\gamma(a) + M_{a}t)\chi(M_{a}^{T}x \cdot t)\, dt|.
\end{align*}
For $x \in A + T_{0, K}$, we write $x = A + M_{a}^{-T}y'$ where $|y'_{j}| \leq \delta^{-j}$ for $j = 1, 2, \ldots, k$.
Therefore
\begin{align*}
|g(x)| &= |\int_{|t_1| \leq \delta, \ldots, |t_k| \leq \delta^k}\wh{g}(\gamma(a) + M_{a}t)\chi(M_{a}^{T}A\cdot t)\chi(y' \cdot t)\, dt|\\
&= |\int_{|t_1| \leq \delta, \ldots, |t_k| \leq \delta^k}\wh{g}(\gamma(a) + M_{a}t)\chi(M_{a}^{T}A\cdot t)\, dt|
\end{align*}
where we have used that $y' \cdot t \in \Z_{q}$, and so $\chi(y' \cdot t) = 1$.
The right hand side is then independent of $y'$ and so the 
above equality is true for all $x \in A + T_{0, K}$. In particular this shows that
$|g|$ is a constant on $A + T_{0, K}$. This constant depends on $K$, $g$ and $A$, but is a constant nonetheless. 

Next, to prove that $\widehat{g 1_T}$ is supported on $\theta_K$, it suffices to observe that $\widehat{g 1_T} = \widehat{g}*\widehat{1_T}$, and that $\widehat{1_T}$ is supported on $\theta_K - \theta_K$ for every $T \in \T(K)$: in fact, for every $T \in \T(K)$, $\widehat{1_T}$ is a modulation of $\widehat{1_{T_{0,K}}}$, and if $a$ is any point in $K$, then $T_{0,K} = M_a^{-T} T_{\delta}$. It follows that
\[
\begin{split}
\widehat{1_{T_{0,K}}}(\xi) &= \int_{M_a^{-T}T_{\delta}} \chi(-x \cdot \xi) dx \\
&= \det(M_a)^{-1} \int_{T_{\delta}} \chi(-M_a^{-T} y \cdot \xi) dy = \det(M_a)^{-1} \delta^{-k(k+1)/2} 1_{\theta_{\delta}}(M_a^{-1} \xi)
\end{split}
\]
is supported on $M_a \theta_{\delta} = \theta_K - \theta_K$. 
Finally, the decomposition \eqref{wavedecomp} follows since parallelepipeds in $\mathbb{T}(K)$ tile $\Q_q^k$.
This completes the proof of the lemma.
\end{proof}

\section{Sketch of the Karatsuba argument}\label{sec:karatsuba}

Before we dive into the proof of Theorem \ref{thm:main}, we review the proof of \eqref{eq:VMTold} with an eye towards
interpreting each step into decoupling language. See also, for example,
\cite[Section 5.1]{V} or \cite[Theorem 13 - Lemma 21]{Tao254A} for more details of the number theoretic argument. 
Just for this section, we revert back to calling $p$ a prime so as to best match these references.

\subsection{Step 1: Introducing some $p$-adic separation} \label{sect:3.1} Given $X \geq 1$, one finds, using the Prime Number Theorem, a prime $p \sim X^{1/k}$ such that
$J_{s, k}(X)$ is controlled by $J_{s, k}(X, p)$, where $J_{s, k}(X, p)$ is defined to be the number of solutions $(x_1, \ldots, x_s, y_1, \ldots, y_s) \in ([1, X] \cap \N)^{2s}$
to \eqref{eq:Vinosystem} with the additional condition that $x_{1}, \ldots, x_k$ are pairwise distinct mod $p$ and $y_{1}, \ldots, y_k$ are
pairwise distinct mod $p$. Since $p$ is rather large, this is a rather mild condition and so we heuristically should still
expect $J_{s, k}(X) \approx J_{s, k}(X, p)$. The benefit of this extra $p$-adic separation (transversality) 
in these $2k$ variables is that we will get to apply Linnik's Lemma (in Step 3, \eqref{linnik} below)
which will up to permutation uniquely determine these variables.

\subsection{Step 2: Applying the union bound/H\"{o}lder}  \label{sect:3.2} We now write $J_{s, k}(X, p)$ as
\begin{align*}
\int_{[0, 1]^{2s}}|\sum_{\st{a_1, \ldots, a_k \Mod{p}\\\text{$a_{i}$ pairwise distinct}}}\prod_{j = 1}^{k}\sum_{\st{n_j \equiv a_{j}\Mod{p}\\1 \leq n_j \leq X}}e(n_j\alpha_1+ \cdots + n_{j}^{k}\alpha_k)|^{2}|\sum_{1 \leq n \leq X}e(n\alpha_1 + \cdots + n^{k}\alpha_k)|^{2s - 2k}\, d\alpha.
\end{align*}
Write $|\sum_{1 \leq n \leq X}|^{2s - 2k} = |\sum_{a\Mod{p}}\sum_{n \equiv a \Mod{p}}|^{2s - 2k}$ and apply H\"{o}lder's inequality to control the above by
\begin{align}
\begin{aligned}\label{jskxpa}
p^{2s - 2k}\max_{a \Mod{p}}\int_{[0, 1]^{2s}}|\sum_{\st{a_1, \ldots, a_k \Mod{p}\\\text{$a_{i}$ pairwise distinct}}}\prod_{j = 1}^{k}&\sum_{\st{n_j \equiv a_{j}\Mod{p}\\1 \leq n_j \leq X}}e(n_j\alpha_1+ \cdots + n_{j}^{k}\alpha_k)|^{2}\times\\
&\quad\quad|\sum_{\st{n \equiv a\Mod{p}\\1 \leq n \leq X}}e(n\alpha_1 + \cdots + n^{k}\alpha_k)|^{2s - 2k}\, d\alpha.
\end{aligned}
\end{align}
Denote the integral above to be $J_{s, k}(X, p, a)$. This expression counts the number of solutions 
$(x_1, \ldots, x_s, y_1, \ldots, y_s) \in ([1, X] \cap \N)^{2s}$ to \eqref{eq:Vinosystem} with $x_{1}, \ldots, x_k$ pairwise distinct mod $p$, $y_{1}, \ldots, y_k$ pairwise distinct mod $p$, and $x_{k + 1} \equiv \cdots \equiv x_s \equiv y_{k + 1} \equiv \cdots \equiv y_s \equiv a \Mod{p}$.

\subsection{Step 3: Solution counting}  \label{sect:3.3} Translation invariance of the Vinogradov system implies that we may bound $J_{s, k}(X, p, a)$ by $J_{s, k}(X, p, 0)$.
Rearrange the Vinogradov system \eqref{eq:Vinosystem} as
\begin{align}\label{step3eq1}
x_{k + 1}^{j} + \cdots + x_{s}^{j} - y_{k + 1}^{j} - \cdots - y_{s}^{j} &=  y_{1}^{j} + \cdots + y_{k}^{j} - x_{1}^{j} - \cdots - x_{k}^{j}, \quad 1 \leq j \leq k
\end{align}
where $x_1, \ldots, x_k$ are distinct mod $p$ and $y_1, \ldots, y_k$ are distinct mod $p$ and 
since we are considering $J_{s, k}(X, p, 0)$, we have that $x_{k + 1}, \ldots, x_s, y_{k + 1}, \ldots, y_s \equiv 0 \Mod{p}$.
Each choice of $x_1, \ldots, x_k, y_1, \ldots, y_k$ gives $\leq J_{s - k, k}(X/p)$ many solutions to
$(x_{k + 1}, \ldots, x_s, y_{k + 1}, \ldots, y_s)$. To see this, write the count
for \eqref{step3eq1} as an integral and use the triangle inequality; the basic idea being that shifts of the Vinogradov system can only give fewer solutions. 

Next, fixing one of the at most $J_{s - k, k}(X/p)$ many tuples $(x_{k + 1}, \ldots, x_s, y_{k + 1}, \ldots, y_s)$, how many valid $x_{1}, \ldots, x_k, y_1, \ldots, y_k$ are there? 
Since requiring $y_1, \ldots, y_k$ to be distinct mod $p$ is a rather mild condition, there are $\leq X^{k}$ such $(y_1, \ldots, y_k)$.
Any valid $(x_1, \ldots, x_k) \in ([1, X] \cap \N)^{k}$ must satisfy
\begin{align*}
x_{1}^{j} + \cdots + x_{k}^{j} \equiv H_{j} \Mod{p^j}, \quad 1 \leq j \leq k
\end{align*}
where the $x_{i}$ are pairwise disjoint mod $p$ for some $H_j$ that depends on $(y_1, \ldots, y_k)$ (of which there are $\leq X^k$ many possibilities) and 
$(x_{k + 1}, \ldots, x_s, y_{k + 1}, \ldots, y_s)$ (of which there are $\leq J_{s - k, k}(X/p)$ many possibilities).
Since $p^{k} \sim X$, instead of counting integers between $1$ and $X$, we can count the $x_i$ mod $p^k$. 
Thus it remains to count the number of residue classes $(x_1 \Mod{p^k}, \ldots, x_k \Mod{p^k})$ such that
\begin{align}\label{linnik}
x_{1}^{j} + \cdots + x_{k}^{j} \equiv H_{j} \Mod{p^j}, \quad 1 \leq j \leq k
\end{align}
and $x_{i} \Mod{p^k}$ are pairwise distinct mod $p$.
Linnik's Lemma \cite{L43} then says that there are at most $k!p^{k(k - 1)/2}$ many such $k$-tuples of residue classes and the proof
follows from first upgrading all residue classes mod $p^j$ in \eqref{linnik} to mod $p^k$ (by paying a cost of $p^{k(k - 1)/2}$) and
then using the Newton-Girard identities which essentially uniquely determine the $x_1, \ldots, x_k$ (up to permutation).
This bound is efficient since probabilistic heuristics suggest that we should expect $\approx (p^k)^{k}/p^{k(k + 1)/2} = p^{k(k - 1)/2}$
many solutions.
Thus we have that
\begin{equation} \label{eq:conclusion3.3}
J_{s, k}(X, p, 0) \lsm_{k} J_{s - k, k}(X/p)X^{k}p^{k(k - 1)/2}.
\end{equation}

\subsection{Step 4: Iteration}
Putting Steps 1 to 3 together we obtain the iteration that
\begin{align}\label{step4}
J_{s, k}(X) \lsm_{k} p^{2s - 2k}J_{s - k, k}(X/p)X^{k}p^{k(k - 1)/2}.
\end{align}
Running this iteration about $O(s/k)$ many steps reduces to an estimate on $J_{k, k}(X)$ from which one can easily compute
there are $O(X^k)$ many solutions by the Newton-Girard identities.
The iteration \eqref{step4} is sharp if both $s$ and $s - k$ are supercritical.
If they are, then heuristically, we expect $J_{s, k}(X) \approx X^{2s - k(k + 1)/2}$ and $J_{s - k, k}(X/p) \approx (X/p)^{2(s - k) - k(k + 1)/2}$.
Then the right hand side of \eqref{step4} becomes $X^{2s}X^{-3k/2 - k^{2}/2}p^{k^2}$ which is equal to $X^{2s - k(k + 1)/2}$ since $p \sim X^{1/k}$.
However, both sides are not the same if one of $s$ or $s - k$ is subcritical. This is where the inefficiency of $X^{\frac{k^2}{2}(1 - \frac{1}{k})^{s/k}}$
comes from.

\subsection{Interpreting Steps 1-4 into decoupling}\label{interpret14} Having briefly summarized the number theoretic argument into four steps, we now briefly sketch the main points
to interpret into decoupling. 
First we discuss the scales needed in the proof. From Steps 1 and 3, there are three scales:
the largest scale $X$, the intermediate scale $p \sim X^{1/k}$, and the smallest scale 1.
Correspondingly in our proof, we use three scales: the smallest scale $\delta$,
the intermediate scale $\nu := q^{\lfloor \log_{q}\delta^{1/k}\rfloor} \sim \delta^{1/k}$, and the largest scale $1$.
For some technical reasons surrounding the broad-narrow reduction, in lieu of the scale 1, we will actually use
the scale $\kappa := q^{\lfloor \log_{q}\delta^{\vep}\rfloor}$ where $\vep$ is as in \eqref{eq:decbdd}.

Next, we discuss the reduction to the decoupling analogue of \eqref{jskxpa}.
In Step 1, two residue classes being distinct mod $p$
means they are $p$-adically separated by a distance 1 and so this should correspond to two intervals which are 1-separated. To get around the use of the Prime Number Theorem,
we make use instead of broad-narrow reduction due to Bourgain and Guth in \cite{BG} which will allow us to reduce
to controlling a multilinear decoupling expression. 

Third, the loss of $p^{2s - 2k}$ in Step 2 above deserves some mention. This loss comes from essentially having applied the union bound 
\begin{align*}
|\sum_{1 \leq n \leq X}e(n\alpha_1 + \cdots + n^{k}\alpha_k)| &= |\sum_{a \Mod{p}}\sum_{\st{n \equiv a \Mod{p}\\1 \leq n \leq X}}e(n\alpha_1 + \cdots + n^{k}\alpha_k)|\\
&\leq p\max_{a\Mod{p}}|\sum_{\st{n \equiv a \Mod{p}\\1 \leq n \leq X}}e(n\alpha_1 + \cdots + n^{k}\alpha_k)|.
\end{align*}
Heuristically we expect this inequality to be efficient since each $\sum_{n \equiv a \Mod{p}}$ contributes equally to the entire sum
as the exponential sum should not bias one residue class mod $p$ over another.
This however is not necessarily true in the decoupling case and will require us to obtain some extra uniformity via dyadic pigeonholing, see Section \ref{pigeon}, later.

Finally, to interpret the solution counting Step 3, we make use of the simple identity \[\int_{\F^k}f(x)\, dx = \wh{f}(0)\]
which converts the integral of $f$ into a question of whether $0$ is contained in the support of $\wh{f}$. This is done in Lemma \ref{lem:counting}
below and the proof relies on the Newton-Girard identities, much like in the proof of Linnik's Lemma. This part of the argument
requires that $p$ is even and is reminiscent of a C\'{o}rdoba-Fefferman argument (see for example \cite[Section 3.2]{Demeter-book} or \cite{Cordoba77, Cordoba82, Fefferman73}).

\section{The main lemma}\label{mainlemma}
One standard property about the moment
curve decoupling constant that we use is affine rescaling. This property plays the analogue of translation-dilation invariance
of the Vinogradov system \eqref{eq:Vinosystem}.
\begin{lemma}[Affine rescaling]\label{rescale}
Let $g$ be a Schwartz function on $\F^k$ Fourier supported in $\bigcup_{K \in P_{\delta}}\ta_K$. Then for any interval $I \subset \O$ of length $\kappa \geq \delta$, we have
\begin{align*}
\nms{g_I}_{L^{p}(\F^k)} \leq \mathfrak{D}_{p}(\frac{\delta}{\kappa})(\sum_{K \in P_{\delta}(I)}\nms{g_K}_{L^{p}(\F^k)}^{2})^{1/2}.
\end{align*}
\end{lemma}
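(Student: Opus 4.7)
The plan is a standard affine rescaling. I would construct an explicit affine bijection $T$ of $\F^k$ that sends the $(\delta/\kappa)$-scale moment curve neighborhood over $\O$ onto the $\delta$-scale neighborhood over $I$, pull back $g_I$ to a new Schwartz function $h$ on $\F^k$ whose Fourier support fits the hypothesis of $\mathfrak{D}_p(\delta/\kappa)$, apply that decoupling constant to $h$, and then change variables back on the physical side. The Jacobian factors that appear when relating $\|h\|_{L^p}$ to $\|g_I\|_{L^p}$ and each $\|h_{K'}\|_{L^p}$ to $\|g_K\|_{L^p}$ will be the same, so they will cancel across the inequality.

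Concretely, I would fix any $a \in I$ and introduce the affine map
\[
T(\eta) := \gamma(a) + A_{a,\kappa}\eta, \qquad A_{a,\kappa} := M_a D_\kappa M_0^{-1},
\]
where $D_\kappa := \mathrm{diag}(\kappa, \kappa^2, \ldots, \kappa^k)$ and $M_0 := \mathrm{diag}(1!, 2!, \ldots, k!)$ (invertible in $\F$ since $q > k$). The key geometric statement I would verify is $T(\theta_{K'}) = \theta_K$ for every $K' \in P_{\delta/\kappa}$, where $K := a + \kappa K' \in P_\delta(I)$ is the corresponding interval. This splits into two pieces: the vector identity $\gamma(a) + A_{a,\kappa}\gamma(b') = \gamma(b)$ for $b = a + \kappa b'$ (which follows directly from the binomial theorem), and the matrix identity $A_{a,\kappa} M_{b'} D_{\delta/\kappa} = M_b D_\delta$ (which, after unwinding the explicit form of $M_{b'}$, reduces to another entrywise binomial expansion). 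Both are elementary calculations enabled by the fact that $|j!| = 1$ in $\F$ for $1 \leq j \leq k$.

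With $T$ in hand, I would define $h$ by $\widehat{h}(\eta) := \widehat{g_I}(T\eta)$. By the geometric statement above, $\widehat{h}$ is supported in $\bigcup_{K' \in P_{\delta/\kappa}} \theta_{K'}$, and the same reasoning shows $\widehat{h_{K'}}(\eta) = \widehat{g_K}(T\eta)$ for corresponding intervals, so $\mathfrak{D}_p(\delta/\kappa)$ applies to $h$. Fourier inversion combined with the substitution $\xi = T\eta$ on the frequency side gives
\[
h(x) = \kappa^{-k(k+1)/2}\, \chi(-\gamma(a)\cdot y)\, g_I(y), \qquad y := A_{a,\kappa}^{-T} x,
\]
and a further change of variables $x = A_{a,\kappa}^T y$ inside $\int |h(x)|^p\, dx$ then shows $\|h\|_{L^p(\F^k)} = \kappa^{-(p-1)k(k+1)/(2p)}\, \|g_I\|_{L^p(\F^k)}$. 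The identical computation applied to each $h_{K'}$ yields $\|h_{K'}\|_{L^p(\F^k)} = \kappa^{-(p-1)k(k+1)/(2p)}\, \|g_K\|_{L^p(\F^k)}$. Plugging $h$ into the defining inequality of $\mathfrak{D}_p(\delta/\kappa)$ and cancelling the common factor $\kappa^{-(p-1)k(k+1)/(2p)}$ recovers the lemma.

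The main technical point is verifying $A_{a,\kappa} M_{b'} D_{\delta/\kappa} = M_b D_\delta$; this identity encodes the affine-rescaling symmetry of the moment curve, and without it the approach would collapse. Fortunately the calculation is purely combinatorial, reducing to the binomial expansion of $(a + \kappa b')^{l-j}$ on each entry, so no $\F$-adic subtlety intervenes beyond the condition $q > k$ that makes $j!$ a unit in $\F$ for $1 \leq j \leq k$ and hence renders both $M_0$ and $M_a$ invertible.
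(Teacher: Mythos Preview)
Your proof is correct and is precisely the standard change-of-variables argument the paper invokes by reference; the paper itself gives no details beyond citing \cite[Section 11.2]{Demeter-book}, so you have in fact supplied more than the paper does. One small simplification: the identity you flag as the main technical point factors cleanly as $A_{a,\kappa} M_{b'} = M_b D_\kappa$ (the $(i,j)$ entry reduces to the binomial expansion of $(a+\kappa b')^{i-j}$), from which $A_{a,\kappa} M_{b'} D_{\delta/\kappa} = M_b D_\delta$ is immediate since $D_\kappa D_{\delta/\kappa} = D_\delta$.
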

\begin{proof}
This proof is standard and follows from a change of variables which can be found for example in \cite[Section 11.2]{Demeter-book}.
\end{proof}

Our main lemma in proving Theorem \ref{thm:main} is the following:
\begin{lemma}\label{lem:main}
Let $p \in 2k + 2\N$, $\delta \in q^{-\N}$ and $\kappa \in q^{-\N} \cap [\delta, 1)$. Let $\nu = q^{\lfloor \log_{q}\delta^{1/k}\rfloor} \in q^{-\N}$ so that $\nu \leq \delta^{1/k}$. If $g$ is a Schwartz function with Fourier support in $\bigcup_{K \in P_{\delta}}\ta_K$, then we have
\begin{align*}
\begin{aligned}
\int_{\F^k}&|g|^{p} \leq C\mathfrak{D}_{p}(\frac{\delta}{\kappa})^{p}(\sum_{K \in P_{\delta}}\nms{g_{K}}_{L^{p}(\F^k)}^{2})^{p/2}+ Cq^{-k(k - 1)}\kappa^{-(k^{2} + 4k - 2)}\nu^{-k(k - 1)/2} N^{p - 2k}\times\\
&\mathfrak{D}_{p - 2k}(\frac{\delta}{\nu})^{p - 2k}\max_{K \in P_{\delta}}\nms{g_K}_{L^{\infty}(\F^k)}^{k}(\sum_{\bar{K} \in P_{\delta}}\nms{g_{\bar{K}}}_{L^{\infty}(\F^k)})^{k}\max_{J \in P_{\nu}}(\sum_{K' \in P_{\delta}(J)}\nms{g_{K'}}_{L^{p - 2k}(\F^k)}^{2})^{(p - 2k)/2}
\end{aligned}
\end{align*}
where $N$ is the number of $J \in P_{\nu}$ for which $g_{J} \neq 0$ and $C$ depends only on $k$ and $p$.
\end{lemma}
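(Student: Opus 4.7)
The plan is to execute Karatsuba's four-step argument from Section \ref{sec:karatsuba} in decoupling language, via the dictionary: pairwise distinct residues mod $p$ correspond to pairwise $\kappa$-separated $\kappa$-intervals; residues mod $p^j$ correspond to intervals at scale $\nu^j$; and Linnik's Lemma corresponds to a Newton--Girard-based Fourier-support counting statement (the forthcoming Lemma \ref{lem:counting}). The starting point is a broad-narrow reduction of $\int|g|^p$ at scale $\kappa$, as developed in Section \ref{broadnarrow}. In the narrow case, $|g(x)|^p$ is pointwise controlled by $O(1)$ many $|g_I(x)|^p$ with $I\in P_\kappa$; applying Lemma \ref{rescale} at relative scale $\delta/\kappa$ inside each such $I$ and summing in $I$ produces the first term of the conclusion.

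In the broad case, at the cost of the factor $\kappa^{-(k^2+4k-2)}$ (the analog of the $p^{2s-2k}$ loss in Step \ref{sect:3.2} of Karatsuba's argument), I obtain $2k$ pairwise $\kappa$-separated intervals $I_1,\ldots,I_k,\bar I_1,\ldots,\bar I_k\in P_\kappa$ and a pointwise estimate roughly of the form $|g|^{2k}\lesssim \kappa^{-(k^2+4k-2)}|g_{I_1}\cdots g_{I_k}\overline{g_{\bar I_1}\cdots g_{\bar I_k}}|$, so that $|g|^p$ is bounded by this product times $|g|^{p-2k}$. To detach the unrestricted factor $|g|^{p-2k}$ (the analog of $|f|^{2s-2k}$) I use the intermediate scale $\nu$: writing $g=\sum_{J\in P_\nu}g_J$ and letting $N$ be the number of nonzero $g_J$, the triangle inequality gives $|g|^{p-2k}\leq N^{p-2k}\max_J|g_J|^{p-2k}$. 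Fixing the maximizing $J$ and applying Lemma \ref{rescale} at relative scale $\delta/\nu$ controls $\|g_J\|_{p-2k}^{p-2k}$ by $\mathfrak{D}_{p-2k}(\delta/\nu)^{p-2k}(\sum_{K'\in P_\delta(J)}\|g_{K'}\|_{p-2k}^2)^{(p-2k)/2}$, matching the corresponding factors in the target.

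It remains to handle the integral of $|g_{I_1}\cdots g_{I_k}\overline{g_{\bar I_1}\cdots g_{\bar I_k}}|\,|g_J|^{p-2k}$. Via Lemma \ref{wavepacket} I decompose each $g_{I_j},g_{\bar I_j}$ into wavepackets at scale $\delta$, producing a double sum over tuples $(K_1,\ldots,K_k)\in\prod_j P_\delta(I_j)$ and $(\bar K_1,\ldots,\bar K_k)\in\prod_j P_\delta(\bar I_j)$. The identity $\int_{\F^k} f\,dx=\widehat{f}(0)$ reduces the relevant integral to a Fourier-support matching condition: the contribution of a fixed tuple is nonzero only if $0$ lies in the Minkowski sum $\theta_{K_1}+\cdots+\theta_{K_k}-\theta_{\bar K_1}-\cdots-\theta_{\bar K_k}$ perturbed by the $\nu$-scale Fourier support of $|g_J|^{p-2k}$. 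This is precisely the input to Lemma \ref{lem:counting}, the decoupling analog of Linnik's Lemma: Newton--Girard forces, for each fixed $(\bar K_1,\ldots,\bar K_k)$, the number of compatible $(K_1,\ldots,K_k)$ to be at most $q^{-k(k-1)}\nu^{-k(k-1)/2}$, with the $q^{-k(k-1)}$ arising from normalized parallelepiped volumes in $\F^k$. Bounding each $g_{K_j}$ in $L^\infty$ and then freely summing $\prod_j\|g_{\bar K_j}\|_\infty$ over all $(\bar K_j)\in P_\delta^k$ gives the remaining factor $\nu^{-k(k-1)/2}\max_K\|g_K\|_\infty^k(\sum_{\bar K}\|g_{\bar K}\|_\infty)^k$. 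The main obstacle I expect is the precise accounting in Lemma \ref{lem:counting}: checking that the ultrametric geometry of $\F^k$ combined with the Newton--Girard identities yields exactly the claimed $\nu^{-k(k-1)/2}$ Linnik-type bound, and that all combinatorial constants from the broad-narrow step, the pigeonholing at scale $\nu$, and the wavepacket decomposition line up with the stated exponents in the lemma.
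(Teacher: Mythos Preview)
Your overall strategy matches the paper's, but the bookkeeping has three errors that would prevent the argument from closing as written. First, standard broad--narrow at scale $\kappa$ produces only $k$ (not $2k$) pairwise $\kappa$-separated intervals $I_1,\dots,I_k$; the broad term is $|g_{I_1}\cdots g_{I_k}|^2$, and the intervals $\bar K_j$ arise from expanding $|g_{I_j}|^2 = g_{I_j}\overline{g_{I_j}}$, so $K_j,\bar K_j \in P_\delta(I_j)$ for the \emph{same} $I_j$. Second, the broad--narrow reduction together with the sum over the $I_j$'s contributes only $\kappa^{-(5k-2)}$, not $\kappa^{-(k^2+4k-2)}$; the remaining $\kappa^{-k(k-1)}$ and the $q^{-k(k-1)}$ both come from the counting lemma (not from parallelepiped volumes), which yields $(q\kappa)^{-k(k-1)}$ compatible tuples $(K_1,\dots,K_k)$ for each fixed $(\bar K_1,\dots,\bar K_k)$ \emph{and} each fixed cube $\Box$.

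Third, and most importantly, you are missing the step that actually produces the factor $\nu^{-k(k-1)/2}$; it does not come from Newton--Girard. The Fourier support of $|g_J|^{p-2k}$ lies in the parallelepiped $\theta_J - \theta_J$ of dimensions $\nu \times \nu^2 \times \cdots \times \nu^k$, and for $j < k$ one has $\nu^j \gg \delta$. Treating this support as a single perturbation would weaken the power-sum constraints to $|p_j(A)-p_j(B)| \leq \nu^j$ rather than $\leq \delta$, and the resulting count $(q\kappa)^{-k(k-1)}(\nu/\delta)^k \sim (q\kappa)^{-k(k-1)}\delta^{-(k-1)}$ is worse than required by a factor of roughly $\delta^{-(k-1)/2}$. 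The paper instead uses Lemma~\ref{lem:geometry}\eqref{lem:geometryi} to decompose $\theta_J - \theta_J$ into $\nu^{-k(k-1)/2}$ cubes $\Box$ of side length $\nu^k \leq \delta$ and applies Lemma~\ref{lem:counting} separately on each $\Box$; the $\nu^{-k(k-1)/2}$ is simply the number of such cubes, the direct analogue of upgrading $(H_1 \bmod p,\dots,H_k \bmod p^k)$ to residues mod $p^k$ at cost $p^{k(k-1)/2}$ in Linnik's lemma.
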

Here $\kappa$ is a somewhat technical parameter that is chosen to be roughly $\delta^{\vep}$ later in Section \ref{mainproof}. However, on
a first reading, it might be more convenient for the reader to take $\kappa = 1/q$ to better grasp the moving parts of the argument.
The somewhat non-standard decoupling right hand side in Lemma
\ref{lem:main} is reminiscent of the right hand side used in Theorem 1.2 of \cite{GMW}.
To give more context to the above lemma, the following estimate is true:

\begin{lemma} \label{lem:reversed} 
For any $p > 2k$, we have
\begin{align*}
( \sum_{K \in P_{\delta}} &\|g_K\|_{L^{p}(\F^k)}^2 )^{p/2}\\
&\leq N^{(p - 2k)/2} \max_{K \in P_{\delta}} \|g_K\|_{L^{\infty}(\F^k)}^k (\sum_{\bar{K} \in P_{\delta}} \|g_{\bar{K}}\|_{L^{\infty}(\F^k)})^k \max_{J \in P_{\kappa}} ( \sum_{K \in P_{\delta}(J)} \|g_K\|_{L^{p-2k}(\F^k)}^2)^{(p - 2k)/2}
\end{align*}
where $N$ is as defined in Lemma \ref{lem:main}.
\end{lemma}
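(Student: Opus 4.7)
The plan is to derive Lemma~\ref{lem:reversed} from the pointwise inequality $|g_K|^{p} \leq \|g_K\|_{L^\infty}^{2k}\,|g_K|^{p-2k}$, a single application of H\"older's inequality on the sum over $K$, and a short counting argument that relates non-zero blocks at scale $\kappa$ to non-zero blocks at scale $\nu$.

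First I would integrate the pointwise bound to get
\[
\|g_K\|_{L^p(\F^k)}^p \leq \|g_K\|_{L^\infty(\F^k)}^{2k}\,\|g_K\|_{L^{p-2k}(\F^k)}^{p-2k}
\]
for each $K \in P_\delta$. Taking the $2/p$-th power, summing over $K$, and applying H\"older's inequality on the sum with the conjugate pair $(p/(2k),\,p/(p-2k))$ is designed exactly so that the resulting two factors become $(\sum_K \|g_K\|_{L^\infty}^2)^{2k/p}$ and $(\sum_K \|g_K\|_{L^{p-2k}}^2)^{(p-2k)/p}$. Raising the inequality to the $p/2$-th power then yields the intermediate estimate
\[
\Bigl(\sum_K \|g_K\|_{L^p}^2\Bigr)^{p/2} \leq \Bigl(\sum_K \|g_K\|_{L^\infty}^2\Bigr)^{k}\Bigl(\sum_K \|g_K\|_{L^{p-2k}}^2\Bigr)^{(p-2k)/2}.
\]

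Next, I would handle the $L^\infty$-factor using the trivial estimate $\sum_K a_K^2 \leq (\max_K a_K)(\sum_{\bar K} a_{\bar K})$ with $a_K = \|g_K\|_{L^\infty}$, which immediately yields the product $(\max_K \|g_K\|_{L^\infty})^k(\sum_{\bar K} \|g_{\bar K}\|_{L^\infty})^k$ on the right-hand side of the lemma. For the $L^{p-2k}$-factor, I would partition the $K$'s according to the $\kappa$-block $J \in P_\kappa$ containing them, giving
\[
\sum_K \|g_K\|_{L^{p-2k}}^2 \leq M \max_{J \in P_\kappa}\sum_{K \in P_\delta(J)} \|g_K\|_{L^{p-2k}}^2,
\]
where $M$ is the number of $J \in P_\kappa$ for which at least one $K \in P_\delta(J)$ has $g_K \ne 0$.

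The final step is to check $M \leq N$, which is where the definition of $N$ in Lemma~\ref{lem:main} matters. Using $\nu \leq \kappa$ (the situation of interest), every non-zero $J \in P_\kappa$ contains a $\nu$-block $J' \subset J$ with $g_{J'} \ne 0$: $g_J \ne 0$ furnishes some $\xi$ with $\wh{g}(\xi) \ne 0$ and $\xi_1 \in J$, and the unique $J' \in P_\nu$ containing $\xi_1$ sits inside $J$ and has $g_{J'} \ne 0$. Since different $\kappa$-blocks contain disjoint sets of $\nu$-blocks, this gives an injection from non-zero $\kappa$-blocks into non-zero $\nu$-blocks, so $M \leq N$. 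Raising to the $(p-2k)/2$-th power and combining with the $L^\infty$-factor completes the proof. I do not anticipate a serious obstacle here; the one thing to verify carefully is that the H\"older exponents $p/(2k)$ and $p/(p-2k)$ genuinely form a conjugate pair, which they do since $2k/p + (p-2k)/p = 1$.
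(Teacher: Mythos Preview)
Your proof is correct and follows essentially the same route as the paper's. Both start from the interpolation $\|g_K\|_{L^p}\le \|g_K\|_{L^\infty}^{2k/p}\|g_K\|_{L^{p-2k}}^{1-2k/p}$ and use H\"older on the sum over $K$ to split off the factors $(\max_K\|g_K\|_{L^\infty})^k(\sum_{\bar K}\|g_{\bar K}\|_{L^\infty})^k$ and $(\sum_K\|g_K\|_{L^{p-2k}}^2)^{(p-2k)/2}$; the paper packages the H\"older step as the single inequality $(\sum_K a_K^{2k/p}b_K^{2k/p}c_K^{2(1-2k/p)})^{p/2}\le(\max_K a_K)^k(\sum_K b_K)^k(\sum_K c_K^2)^{(p-2k)/2}$ with $a_K=b_K=\|g_K\|_{L^\infty}$, which is exactly your two-step argument. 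The only difference is that the paper simply asserts the final bound $(\sum_K\|g_K\|_{L^{p-2k}}^2)^{(p-2k)/2}\le N^{(p-2k)/2}\max_{J\in P_\kappa}(\cdots)$ without comment, whereas you supply the missing justification via the injection from nonzero $\kappa$-blocks to nonzero $\nu$-blocks under the standing assumption $\nu\le\kappa$; that extra care is a genuine improvement in exposition.
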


\begin{proof}
H\"{o}lder's inequality gives us
\[
\|g_K\|_{L^{p}(\F^k)} \leq \|g_K\|_{L^{\infty}(\F^k)}^{\frac{2k}{p}} \|g_K\|_{L^{p - 2k}(\F^k)}^{1-\frac{2k}{p}},
\]
and so, applying
\[
\Big(\sum_K a_K^{\frac{2k}{p}} b_K^{\frac{2k}{p}} c_K^{2(1-\frac{2k}{p})} \Big)^{\frac{p}{2}}
\leq (\max_K a_K)^k (\sum_K b_K)^k (\sum_K c_K^2)^{\frac{p-2k}{2}}
\]
with $a_K = b_K = \|g_K\|_{L^{\infty}(\F^k)}$ and $c_K = \|g_K\|_{L^{p - 2k}(\F^k)}$, we get
\[
( \sum_{K \in P_{\delta}} \|g_K\|_{L^{p}(\F^k)}^2 )^{p/2} 
\leq \max_{K \in P_{\delta}} \|g_K\|_{L^{\infty}(\F^k)}^k (\sum_{\bar{K} \in P_{\delta}} \|g_{\bar{K}}\|_{L^{\infty}(\F^k)})^k ( \sum_{K \in P_{\delta}} \|g_K\|_{L^{p-2k}(\F^k)}^2 )^{(p - 2k)/2}.
\]
It remains to observe that
\begin{align*}
( \sum_{K \in P_{\delta}} \|g_K\|_{L^{p-2k}(\F^k)}^2 )^{(p - 2k)/2} \leq N^{(p - 2k)/2}  \max_{J \in P_{\kappa}} ( \sum_{K \in P_{\delta}(J)} \|g_K\|_{L^{p-2k}(\F^k)}^2)^{(p - 2k)/2}. 
\end{align*}
\end{proof}
Suppose for a moment that in Lemma \ref{lem:reversed}, we had an equality instead of an inequality. This is indeed the case when
$g(x)$ is the exponential sum $X^{-100k^2} 1_{|x| \leq X^{100k}}\sum_{j=1}^X e(\gamma(j) \cdot x)$ that arises in using decoupling to estimate 
the number of solutions in \eqref{eq:VMTold}.
As $N \leq \nu^{-1}$ (and taking, for convenience, $\kappa = 1/q$), Lemma~\ref{lem:main} would give us
\begin{align}\label{iterationheur}
\mathfrak{D}_p(\delta)^p \leq C \, \mathfrak{D}_p(q\delta)^p + C q^{5k - 2} \nu^{-\frac{p}{2}+k-\frac{k(k-1)}{2}} \mathfrak{D}_{p-2k}(\frac{\delta}{\nu})^{p-2k} 
\end{align}
Heuristically, we expect this iteration to be efficient as long as $p - 2k$ (and so also $p$) is supercritical. 
To see this, if $r$ is supercritical,
then we heuristically expect that $\mathfrak{D}_{r}(\delta)^{r} \approx \delta^{-\frac{r}{2} + \frac{k(k + 1)}{2}}$ for all $\delta$. 
Thus the iteration should be
efficient if with this assumption on the size of $\mathfrak{D}_{r}(\delta)^{r}$, both sides of \eqref{iterationheur}
are the same. The right hand side of \eqref{iterationheur} is then 
\begin{align*}
\sim_{q} (\delta^{-\frac{1}{k}})^{\frac{p}{2}+k-\frac{k(k-1)}{2}} (\delta^{-1+\frac{1}{k}})^{\frac{p-2k}{2}-\frac{k(k+1)}{2}} = \delta^{-(\frac{p}{2} - \frac{k(k+1)}{2} )}
\end{align*}
which is comparable to the left hand side of \eqref{iterationheur}. A similar calculation shows that this iteration
is not efficient if at least one of $p$ or $p - 2k$ is subcritical.

Unfortunately the reverse inequality in Lemma~\ref{lem:reversed} fails to hold for general $g$. This is because we lack the uniformity in the exponential sum that one considers when one counts solutions to the Vinogradov system. This uniformity can be restored by pigeonholing, which only produces $\delta^{-\varepsilon}$ losses. This pigeonholing must be done before one applies induction on scales and iterates on the Lebesgue exponent $p$. The full argument is carried out in detail in Section~\ref{mainproof}.

\subsection{Proof of Lemma \ref{lem:main}}
The proof of Lemma \ref{lem:main} uses a broad/narrow dichotomy, due to Bourgain and Guth \cite{BG} combined with some basic geometric
geometric properties of the moment curve. See also for example \cite[Chapter 7]{Demeter-book}. 

\subsubsection{The broad-narrow argument}\label{broadnarrow}
First, we have the pointwise bound $|g(x)| \leq \sum_{I \in P_{\kappa}} |g_I(x)|$. At every point $x \in \F^k$, let $\mathcal{I}_x$ be the set of all intervals $I' \in P_{\kappa}$ such that $|g_{I'}(x)| \geq \kappa \max_{I \in P_{\kappa}} |g_I(x)|$. Suppose first $\mathcal{I}_x$ contains at least $k$ (disjoint) intervals, say $I'_1,\dots,I'_k$ (all dependent on $x$) of length $\kappa$ and $|g_{I'_1}(x)| = \max_{I \in P_{\kappa}} |g_I(x)|$: in this case we have
\begin{align*}
|g(x)| &\leq \kappa^{-1} \max_{I \in P_{\kappa}} |g_I(x)| \leq \kappa^{-1} \kappa^{-(k-1)/k} |g_{I'_1}(x) \dots g_{I'_k}(x)|^{1/k} \\
&\leq \kappa^{-2 + 1/k} \max_{\substack{I_1, \dots, I_k \in P_{\kappa} \\ d(I_i,I_j) > \kappa \, \forall i \ne j}} |g_{I_1}(x) \dots g_{I_k}(x)|^{1/k}.
\end{align*}
Here we used that in $\F^k$, two distinct intervals of the same length are separated by at least that length.
Alternatively, $\mathcal{I}_x$ contains at most $k-1$ intervals, in which case
\[
|g(x)| \leq \sum_{I \in \mathcal{I}_x} |g_I(x)| + \sum_{I \in P_{\kappa} \setminus \mathcal{I}_x} |g_I(x)| < (k-1) \max_{I \in P_{\kappa}} |g_I(x)| + \sum_{I \in P_{\kappa} \setminus \mathcal{I}_x} \kappa \max_{I \in P_{\kappa}} |g_I(x)| < k \max_{I \in P_{\kappa}} |g_I(x)|.
\]
As a result, we obtain the pointwise bound that for each $x \in \F^k$, we have
\[
|g(x)| \leq k \max_{I \in P_{\kappa}} |g_{I}(x)| + \kappa^{-2 + 1/k} \max_{\substack{I_1, \dots, I_k \in P_{\kappa} \\ d(I_i,I_j) > \kappa \, \forall i \ne j}} |g_{I_1}(x) \dots g_{I_k}(x)|^{1/k}
\]
which, upon raising both sides to power $2k$ and applying $(A+B)^{2k} \leq 2^{2k-1} (A^{2k}+B^{2k})$ (a consequence of the convexity of $x \mapsto x^{2k}$), yields
\begin{equation} \label{eq:g_pointwise}
|g(x)|^{2k} 
\leq 2^{2k-1} k^{2k}
\max_{I \in P_{\kappa}} |g_{I}(x)|^{2k} 
+ 2^{2k-1} \kappa^{-(4k-2)} \max_{\substack{I_1, \dots, I_k \in P_{\kappa} \\ d(I_i,I_j) > \kappa \, \forall i \ne j}} |g_{I_1}(x) \dots g_{I_k}(x)|^2.
\end{equation} 
Using this pointwise bound while integrating we find that 
\begin{align*}
\int_{\F^k}|g|^p 
&= \int_{\F^k}|g|^{2k} |g|^{p-2k} 
\\ &\leq  
C \int_{\F^k}(\max_{I \in P_{\kappa}} |g_I|^2)^k |g|^{p-2k} 
+ C \kappa^{-(4k-2)} \int_{\F^k}\max_{\substack{I_1, \dots, I_k \in P_{\kappa} \\ d(I_i,I_j) > \kappa \, \forall i \ne j}} |g_{I_1} \dots g_{I_k}|^2 |g|^{p-2k} 
\\ &\leq 
C \int_{\F^k}(\sum_{I \in P_{\kappa}} |g_I|^2)^k |g|^{p-2k} 
+ C \kappa^{-(4k-2)} \sum_{\substack{I_1, \dots, I_k \in P_{\kappa} \\ d(I_i,I_j) > \kappa \, \forall i \ne j}} \int_{\F^k}|g_{I_1} \dots g_{I_k}|^2 |g|^{p-2k}
\\ &\leq 
C \int_{\F^k}(\sum_{I \in P_{\kappa}} |g_I|^2)^k |g|^{p-2k} 
+ C \kappa^{-(4k-2)} \kappa^{-k} \max_{\substack{I_1, \dots, I_k \in P_{\kappa} \\ d(I_i,I_j) > \kappa \, \forall i \ne j}} \int_{\F^k}|g_{I_1} \dots g_{I_k}|^2 |g|^{p-2k} 
\end{align*}
for some $C$ depending on $k$.
H\"{o}lder's inequality followed by Minkowski's inequality implies that the first term satisfies
\begin{align*}
C \int_{\F^k}(\sum_{I \in P_{\kappa}} |g_I|^2)^k |g|^{p-2k} 
& \leq C \Big( \int_{\F^k}(\sum_{I \in P_{\kappa}} |g_I|^2)^{p/2} \Big)^{2k/p} \Big( \int_{\F^k}|g|^p \Big)^{(p-2k)/p} 
\\& \leq C \Big( \sum_{I \in P_{\kappa}} \|g_I\|_{L^{p}(\F^k)}^2 \Big)^{k} \Big( \int_{\F^k}|g|^p \Big)^{(p-2k)/p} 
\\& \leq \frac{1}{2} \int_{\F^k}|g|^p   + C' \Big( \sum_{I \in P_{\kappa}} \|g_I\|_{L^{p}(\F^k)}^2 \Big)^{p/2}
\end{align*}
for some $C'$ that depends on $k$ and $p$.
The last inequality uses Young's inequality and the fact that $p\geq2k$. 
Therefore, 
\[
\int_{\F^k}|g|^p 
\lesssim_{p} 
\Big( \sum_{I \in P_{\kappa}} \|g_I\|_{L^{p}(\F^k)}^2 \Big)^{p/2} 
+ \kappa^{-(5k-2)} \max_{\substack{I_1, \dots, I_k \in P_{\kappa} \\ d(I_i,I_j) > \kappa \, \forall i \ne j}} \int_{\F^k}|g_{I_1} \dots g_{I_k}|^2 |g|^{p-2k}. 
\]
Using affine rescaling (Lemma \ref{rescale}) \and applying the definition \eqref{def:dec_const} of our decoupling constant, we deduce that 
\[
\Big( \sum_{I \in P_{\kappa}} \|g_I\|_{L^{p}(\F^k)}^2 \Big)^{p/2} \leq \mathfrak{D}_p(\frac{\delta}{\kappa})^p \Big( \sum_{K \in P_{\delta}} \|g_K\|_{L^{p}(\F^k)}^2 \Big)^{p/2}.
\]
Plugging this into the above yields
\begin{equation} \label{eq:analog_step1}
\int_{\F^k}|g|^p 
\lesssim_{p}
\mathfrak{D}_p(\frac{\delta}{\kappa})^p \Big( \sum_{K \in P_{\delta}} \|g_K\|_{L^{p}(\F^k)}^2 \Big)^{p/2} 
+ \kappa^{-(5k-2)} \max_{\substack{I_1, \dots, I_k \in P_{\kappa} \\ d(I_i,I_j) > \kappa \, \forall i \ne j}} \int_{\F^k}|g_{I_1} \dots g_{I_k}|^2 |g|^{p-2k}.
\end{equation}
This inequality \eqref{eq:analog_step1} is the analogue of Step 1 in Section \ref{sect:3.1}. The requirement that we analyze solutions to the Vinogradov system with $x_1,\dots,x_s$ and $y_1,\dots,y_s$ being distinct mod~$p$ corresponds to the requirement that we analyze $\int_{\F^k} |g_{I_1} \dots g_{I_k}|^2 |g|^{p-2k}$ with $d(I_i,I_j) > \kappa$ for all $1 \leq i \ne j \leq k$ with $\kappa = 1/q$.

Next, we mimic Step 2 in Section \ref{sect:3.2}. Recalling our definition of $N$ in the statement of Lemma \ref{lem:main}, H\"{o}lder's inequality gives
\[
|g|^{p-2k} 
\leq N^{p-2k-1} \sum_{J \in P_{\nu}} |g_J|^{p-2k}.
\] 
Applying this in the second term in \eqref{eq:analog_step1}, we get
\begin{align}
&\int_{\F^k}|g|^p \notag \\
&\lesssim_{p} 
\mathfrak{D}_p(\frac{\delta}{\kappa})^p \Big( \sum_{K \in P_{\delta}} \|g_K\|_{L^{p}(\F^k)}^2 \Big)^{p/2} 
+ \kappa^{-(5k-2)} N^{p-2k-1} \max_{\substack{I_1, \dots, I_k \in P_{\kappa} \\ d(I_i,I_j) > \kappa \, \forall i \ne j}} \sum_{J \in P_{\nu}} \int_{\F^k}|g_{I_1} \dots g_{I_k}|^2 |g_J|^{p-2k} \notag \\
&\lesssim_{p} 
\mathfrak{D}_p(\frac{\delta}{\kappa})^p \Big( \sum_{K \in P_{\delta}} \|g_K\|_{L^{p}(\F^k)}^2 \Big)^{p/2} 
+ \kappa^{-(5k-2)} N^{p-2k} \max_{\substack{I_1, \dots, I_k \in P_{\kappa} \\ d(I_i,I_j) > \kappa \, \forall i \ne j}} \max_{J \in P_{\nu}} \int_{\F^k}|g_{I_1} \dots g_{I_k}|^2 |g_J|^{p-2k} \label{eq:analog_step2}
\end{align}
which is the analogue of Step 2 in Section~\ref{sect:3.2}.

To analyze the second term in \eqref{eq:analog_step2}, we fix $I_1, \dots, I_k \in P_{\kappa}$ with $d(I_i,I_j) > \kappa$ for all $i \ne j$, and fix $J \in P_{\nu}$ with $g_J \ne 0$. To estimate the integral $\int_{\F^k}|g_{I_1} \dots g_{I_k}|^2 |g_J|^{p-2k}$, first 
note that the Fourier transform of $|g_J|^2 = g_J \overline{g_J}$ is supported in the parallelepiped $\theta_J - \theta_J$, of dimension $\nu \times \nu^2 \times \dots \times \nu^k$. Since our hypothesis guarantees that $p-2k$ is an even positive integer, the same is true for the Fourier transform of $|g_J|^{p-2k}$. Lemma~\ref{lem:geometry}\eqref{lem:geometryi} applied to $J \in P_{\nu}$ instead of $K \in P_{\delta}$ shows that the Fourier support of $|g_J|^{p-2k}$ is the disjoint union of $\nu^{-k(k - 1)/2}$ many cubes of side lengths $\nu^k$, and we denote this collection of cubes by $\{\Box\}$. 
This corresponds to the fact that we have a $k$-tuple of residue
classes $(H_1 \Mod{p}, H_2 \Mod{p^2}, \ldots, H_k \Mod{p^k})$ which we can upgrade to $p^{k(k - 1)/2}$
many $k$-tuples of the form $(H_{1}' \Mod{p^k}, H_{2}' \Mod{p^k}, \ldots, H_{k}' \Mod{p^k})$. 
Note that the side length $\nu^k$ of the cubes $\Box$ is $\leq \delta$. 

We now apply Fourier inversion and turn products into convolutions. We have
\begin{align*}
\int_{\F^k}|g_{I_1} \dots g_{I_k}|^2 |g_J|^{p-2k} &= \sum_{\substack{K_i \in P_{\delta}(I_i) \\ i=1,\dots,k}} \sum_{\substack{\bar{K}_j \in P_{\delta}(I_j) \\ j=1,\dots,k}} \int_{\F^k}g_{K_1} \dots g_{K_k} \overline{g_{\bar{K}_1}} \dots \overline{g_{\bar{K}_k}} |g_J|^{p-2k} \\
&= \sum_{\Box} \sum_{\substack{K_i \in P_{\delta}(I_i) \\ i=1,\dots,k}} \sum_{\substack{\bar{K}_j \in P_{\delta}(I_j) \\ j=1,\dots,k}} \widehat{g_{K_1}} * \dots * \widehat{g_{K_k}} * \gkbbh * (\widehat{ |g_J|^{p-2k} } 1_{\Box})(0). 
\end{align*}
For each fixed $\Box$ and $\bar{K}_1 \in P_{\delta}(I_1)$, $\dots$, $\bar{K}_k \in P_{\delta}(I_k)$, let $S(\bar{K}_1,\dots,\bar{K}_k,\Box)$ be the set of all $(K_1,\dots,K_k)$ with $K_i \in P_{\delta}(I_i)$ such that 
\begin{align}\label{momentgeom}
0 \in \supp (\widehat{g_{K_1}} * \dots * \widehat{g_{K_k}} * \gkbbh * (\widehat{ |g_J|^{p-2k} } 1_{\Box})).
\end{align}
We will prove in Lemma~\ref{lem:counting} below that $\# S(\bar{K}_1,\dots,\bar{K}_k,\Box) \leq (q\kappa)^{-k(k-1)}$. 
If we think of the model case when $\kappa = 1/q$, this would say that the $\bar{K}_i$ and $\Box$ uniquely determine the $K_i$ in
\eqref{momentgeom}. This analogous to the situation in Linnik's lemma where once we upgrade \eqref{linnik}
to residue classes mod $p^k$, the remaining variables are essentially uniquely determined.

We now write
\begin{align*}
&\int_{\F^k}|g_{I_1} \dots g_{I_k}|^2 |g_J|^{p-2k} \\
&\quad= 
\Big| \sum_{\Box}  \sum_{\substack{\bar{K}_j \in P_{\delta}(I_j) \\ j=1,\dots,k}} \sum_{\substack{(K_1,\dots,K_k) \in \\ S(\bar{K}_1,\dots,\bar{K}_k,\Box)}} \widehat{g_{K_1}} * \dots * \widehat{g_{K_k}} * \gkbbh * (\widehat{ |g_J|^{p-2k} } 1_{\Box})(0) \Big|\\
&\quad\leq 
\sum_{\Box}  \sum_{\substack{\bar{K}_j \in P_{\delta}(I_j) \\ j=1,\dots,k}} \sum_{\substack{(K_1,\dots,K_k) \in \\ S(\bar{K}_1,\dots,\bar{K}_k,\Box)}} \int_{\F^k}|g_{K_1} \dots g_{K_k} g_{\bar{K}_1} \dots g_{\bar{K}_k}| |g_J|^{p-2k}*|\reallywidecheck{1_{\Box}}| \\
&\quad\leq 
\sum_{\Box}(\sum_{\bar{K} \in P_{\delta}} \|g_{\bar{K}}\|_{L^{\infty}(\F^k)})^k (q\kappa)^{-k(k-1)} \max_{K \in P_{\delta}} \|g_K\|_{L^{\infty}(\F^k)}^k \int_{\F^k}|g_J|^{p-2k}*|\reallywidecheck{1_{\Box}}|. 
\end{align*}
Since $\int_{\F^k}|\reallywidecheck{1_{\Box}}| = 1$ and the number of $\Box$ is $\nu^{-k(k - 1)/2}$, this gives
\begin{align*}
\int_{\F^k}|g_{I_1} &\dots g_{I_k}|^2 |g_J|^{p-2k} \leq 
\nu^{-\frac{k(k-1)}{2}} (q\kappa)^{-k(k-1)} \max_{K \in P_{\delta}} \|g_K\|_{L^{\infty}(\F^k)}^k (\sum_{\bar{K} \in P_{\delta}} \|g_{\bar{K}}\|_{L^{\infty}(\F^k)})^k  \int_{\F^k}|g_J|^{p-2k}.
\end{align*}
Applying affine rescaling shows that this is
\begin{align}
\begin{aligned}
\leq 
\nu^{-\frac{k(k-1)}{2}} (q\kappa)^{-k(k-1)}  &\max_{K \in P_{\delta}} \|g_K\|_{L^{\infty}(\F^k)}^k \times\\
&(\sum_{\bar{K} \in P_{\delta}} \|g_{\bar{K}}\|_{L^{\infty}(\F^k)})^k \mathfrak{D}_{p-2k}(\frac{\delta}{\nu})^{p-2k} \Big( \sum_{K' \in P_{\delta}(J)} \|g_{K'}\|_{L^{p-2k}(\F^k)}^2 \Big)^{\frac{p-2k}{2}}. \label{eq:analog_step3}
\end{aligned}
\end{align}
One can think of \eqref{eq:analog_step3} as the analogue of \eqref{eq:conclusion3.3} in Section~\ref{sect:3.3} in the following way:
the term $\nu^{-k(k - 1)/2}(q\kappa)^{-k(k - 1)}\max_{K \in P_{\delta}}\nms{g_K}_{\infty}^{k}$ plays the role of $p^{k(k - 1)/2}$ from Linnik's lemma, the term $(\sum_{\bar{K} \in P_{\delta}} \|g_{\bar{K}}\|_{\infty})^k$ plays
the role of $X^k$, and finally the term $\mathfrak{D}_{p - 2k}(\frac{\delta}{\nu})^{p - 2k}(\sum_{K' \in P_{\delta}(J)}\nms{g_{K'}}_{p - 2k}^{2})^{\frac{p - 2k}{2}}$
plays the role of the $J_{s - k, k}(X/p)$.

Plugging \eqref{eq:analog_step3} back to \eqref{eq:analog_step2}, we then obtain
\begin{align*}
    \int_{\F^k}|g|^p \lesssim_{p} &\mathfrak{D}_p(\frac{\delta}{\kappa})^p \Big( \sum_{K \in P_{\delta}} \|g_K\|_{L^{p}(\F^k)}^2 \Big)^{p/2} 
+ q^{-k(k-1)} \kappa^{-(k^2+4k-2)} \nu^{-\frac{k(k-1)}{2}} N^{p-2k}  \times \\
&\mathfrak{D}_{p-2k}(\frac{\delta}{\nu})^{p-2k} \max_{K \in P_{\delta}} \|g_K\|_{L^{\infty}(\F^k)}^k (\sum_{\bar{K} \in P_{\delta}} \|g_{\bar{K}}\|_{L^{\infty}(\F^k)})^k \Big( \sum_{K' \in P_{\delta}(J)} \|g_{K'}\|_{L^{p - 2k}(\F^k)}^2 \Big)^{\frac{p-2k}{2}}.
\end{align*}

\subsubsection{Geometry of the moment curve}
The proof of Lemma \ref{lem:main} is now complete modulo the proof of the following lemma, which  provides the key geometric input that enables one to count $\# S(\bar{K}_1,\dots,\bar{K}_k,\Box)$.
This is the analogue of Linnik's Lemma (\cite[Corollary 17]{Tao254A} and the estimate for $\mathbf{B(g)}$ in the proof of \cite[Lemma 5.1]{V}); see also \cite[Proposition 1.3]{GGPRY} and \cite[Proposition 3.1]{BBH}.
Both proofs use the Newton-Girard identities in essentially the same way. The hypothesis that $q > k$, where $q$ is the characteristic of our base field $\F$ and $k$ is the degree of the moment curve, plays a role in the following lemma.

\begin{lemma} \label{lem:counting}
Let $p \in 2k + 2\N$, $\delta \in q^{-\N}$, $\kappa \in q^{-\N} \cap [\delta,1)$, and $\nu = q^{\lfloor \log_q \delta^{1/k} \rfloor} \in q^{-\N}$ so that $\nu \leq \delta^{1/k}$. 
Suppose that $I_1, \dots, I_k \in P_{\kappa}$ with $d(I_i,I_j) > \kappa$ for all $i \ne j$. 
Let $\Box$ be a cube of side length $\nu^k$ and $\bar{K}_1 \in P_{\delta}(I_1), \ldots, \bar{K}_k \in P_{\delta}(I_k)$. Define  $S(\bar{K}_1,\dots,\bar{K}_k,\Box)$ be the set of all ordered $k$-tuples $(K_1,\dots,K_k)$ with $K_i \in P_{\delta}(I_i)$ such that 
$$0 \in \supp (\widehat{g_{K_1}} * \dots * \widehat{g_{K_k}} * \gkbbh * (\widehat{ |g_J|^{p-2k} } 1_{\Box})).$$
Then
\begin{equation*}
\# S(\bar{K}_1,\dots,\bar{K}_k,\Box) \leq (q\kappa)^{-k(k-1)}
.\end{equation*}
\end{lemma}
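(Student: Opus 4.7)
The plan is to transplant Linnik's lemma from Section~\ref{sec:karatsuba} into the $\Q_q$-adic setting, by extracting a Newton--Girard style congruence from the Fourier support condition and then running a Hensel-type root-separation argument. First I would extract a necessary condition from $0 \in \supp(\widehat{g_{K_1}} * \cdots * \widehat{g_{K_k}} * \gkbbh * (\widehat{|g_J|^{p-2k}} 1_\Box))$: such a $0$ lies in the Minkowski sum $\sum_i \theta_{K_i} - \sum_i \theta_{\bar{K}_i} + \Box$, so there exist $\xi_i \in \theta_{K_i}$, $\eta_i \in \theta_{\bar{K}_i}$, $\omega \in \Box$ with $\sum_i \xi_i - \sum_i \eta_i + \omega = 0$. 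Pick representatives $a_i \in K_i$, $\bar{a}_i \in \bar{K}_i$ and let $c$ be the center of $\Box$. The ultrametric containment $\theta_K \subset \tau_K$ combined with $|\Box| = \nu^k \leq \delta$ then implies that the $l$-th coordinate of the vector equation yields
\[
\sum_{i=1}^k a_i^l \equiv H_l \pmod{\delta}, \quad 1 \leq l \leq k,
\]
where $H_l := \sum_i \bar{a}_i^l - c_l$ depends only on $(\bar{K}_1, \ldots, \bar{K}_k, \Box)$.

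Next, since $q > k$, every integer in $\{1, 2, \ldots, k\}$ is a unit in $\F$, so the Newton--Girard identities solve recursively for each elementary symmetric polynomial $e_l(a_1, \ldots, a_k)$ modulo $\delta$ in terms of $H_1, \ldots, H_l$. Hence the monic polynomial $P_a(x) := \prod_i (x - a_i) \in \O[x]$ reduces modulo $\delta$ to a polynomial $\tilde{P}(x) \in (\O/\delta\O)[x]$ that depends only on $(\bar{K}_1, \ldots, \bar{K}_k, \Box)$.

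The third step is a Hensel-style root-separation bound. If $S(\bar{K}_1, \ldots, \bar{K}_k, \Box)$ is empty the claim is trivial, so fix a distinguished tuple in $S$ with representatives $a^* = (a_1^*, \ldots, a_k^*)$; then $\tilde{P} \equiv P_{a^*} \pmod{\delta}$. For any other tuple in $S$ with representatives $(a_i)$, evaluating $P_a \equiv P_{a^*} \pmod{\delta}$ at $x = a_i \in \O$ (where $P_a(a_i) = 0$) gives $|P_{a^*}(a_i)| \leq \delta$. Factoring $P_{a^*}(a_i) = (a_i - a_i^*) \prod_{j \neq i}(a_i - a_j^*)$, the assumption $d(I_i, I_j) > \kappa$ together with ultrametricity forces $|a_i - a_j^*| = |a_i^* - a_j^*| \geq q\kappa$ for each $j \neq i$, so $|a_i - a_i^*| \leq \delta/(q\kappa)^{k-1}$. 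The number of $K_i \in P_\delta(I_i)$ whose representatives satisfy this ball condition is at most $(q\kappa)^{-(k-1)}$ --- whether the ball of radius $\delta/(q\kappa)^{k-1}$ around $a_i^*$ fits inside $I_i$ (then the count is exactly that radius over $\delta$) or swallows it (then the count is $\kappa/\delta \leq (q\kappa)^{-(k-1)}$). Taking the product over $i = 1, \ldots, k$ delivers the claimed bound $(q\kappa)^{-k(k-1)}$.

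The main obstacle is the setup in step one: the Fourier support condition must descend cleanly to a single congruence modulo the common modulus $\delta$, which is precisely what forces the cube $\Box$ to have side length $\leq \delta$ (encoded in $\nu^k \leq \delta$); without this all three sources of error (in $\xi_i$, $\eta_i$ and $\omega$) would live at different scales and the polynomial-congruence reformulation would fail. Once this alignment is in place, the remainder is a faithful $\Q_q$-adic rendition of Linnik's lemma, with the separation of the $x_i$ modulo the prime replaced by the ultrametric separation of the intervals $I_i$ at scale $q\kappa$.
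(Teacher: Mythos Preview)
Your proof is correct and follows essentially the same approach as the paper's: extract from the Fourier support condition the congruences $|\sum_i a_i^l - \sum_i b_i^l| \leq \delta$, use the Newton--Girard identities (with $q > k$ making each $j \leq k$ a unit) to upgrade these to congruences on the elementary symmetric polynomials, and then evaluate the resulting polynomial identity at a root to exploit the $q\kappa$-separation of the $I_i$. The only structural difference is that the paper argues by contradiction, pigeonholing two tuples $(A_i)$ and $(B_i)$ in $S$ with $d(A_{i_0},B_{i_0}) > (q\kappa)^{-(k-1)}\delta$ and comparing them directly (so that the $\bar{K}_i$ and $\Box$ contributions cancel immediately), whereas you fix a reference tuple $a^*$ and show every other tuple lies in the ball of radius $\delta/(q\kappa)^{k-1}$ around it in each coordinate; these are two phrasings of the same argument.
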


\begin{proof}
Assume for the sake of contradiction that $\#S(\bar{K}_1, \ldots, \bar{K}_{k}, \Box) > (q\kappa)^{-k(k - 1)} \geq 1$. We can find two 
$k$-tuples of intervals $(A_{1}, \ldots, A_k)$ and $(B_{1}, \ldots, B_k)$ with each $A_{i}, B_{i} \in P_{\delta}(I_{i})$ such that
\begin{align}
0 &\in \supp (\widehat{g_{A_1}} * \dots * \widehat{g_{A_k}} * \gkbbh * (\widehat{ |g_J|^{p-2k} } 1_{\Box})),\label{eq1v1}\\
0 &\in \supp (\widehat{g_{B_1}} * \dots * \widehat{g_{B_k}} * \gkbbh * (\widehat{ |g_J|^{p-2k} } 1_{\Box})),\label{eq2v1}
\end{align}
and such that there exists an $i_0$ with $d(A_{i_0}, B_{i_0}) > (q\kappa)^{-(k - 1)}\delta$. Indeed, if not, picking an arbitrary $(C_1, \ldots, C_k) \in S(\bar{K}_1, \ldots, \bar{K}_{k}, \Box)$, 
shows that any other $(D_1, \ldots, D_k) \in S(\bar{K}_1, \ldots, \bar{K}_{k}, \Box)$ must satisfy $d(C_i, D_i) \leq (q\kappa)^{-(k - 1)}\delta$. This gives at most
$(q\kappa)^{-k(k - 1)}$ many $k$-tuples which violates our initial assumption that $\#S(\bar{K}_1, \ldots, \bar{K}_{k}, \Box) > (q\kappa)^{-k(k - 1)}$.
Without loss of generality, we may assume that $i_0 = 1$.

Since for each $i = 1, 2, \ldots, k$, we have $A_{i}, B_{i} \subset I_{i}$ and $d(I_{i}, I_{j}) > \kappa$ for all $i \neq j$, this implies
\begin{align}\label{distv1}
d(A_{i}, A_{j}) \geq q\kappa, \quad d(B_{i}, B_{j}) \geq q\kappa, \quad d(A_{i}, B_j) \geq q\kappa \quad \text{whenever $j \neq i$}
\end{align}
(thus the only distances we do not have any control over are the ones of the form $d(A_i, B_i)$, $i \neq 1$). By \eqref{eq1v1} and \eqref{eq2v1}, we have that
\begin{align*}
0 \in (\sum_{i = 1}^{k}\tau_{A_i} - \sum_{i = 1}^{k}\tau_{\bar{K}_i} + \Box) \cap (\sum_{i = 1}^{k}\tau_{B_i} - \sum_{i = 1}^{k}\tau_{\bar{K}_i} + \Box)
\end{align*}
where here we recall the definition of $\tau_K$ in \eqref{taukdef}.
Each $\tau_{A_i}$, $\tau_{B_i}$, and $\tau_{\bar{K}_i}$ are cubes in $\F^k$ of side length $\delta$ and $\Box$ is a cube in $\F^k$
of side length $\nu^{k} \leq \delta$. Thus by the ultrametric inequality, both $\sum_{i = 1}^{k}\tau_{A_i} - \sum_{i = 1}^{k}\tau_{\bar{K}_i} + \Box$
and $\sum_{i = 1}^{k}\tau_{B_i} - \sum_{i = 1}^{k}\tau_{\bar{K}_i} + \Box$ are cubes in $\F^k$ of side length $\delta$. Furthermore, by the ultrametric inequality,
since two cubes of side length $\delta$ are either completely disjoint or exactly the same, we must have
\begin{align*}
\sum_{i = 1}^{k}\tau_{A_i} - \sum_{i = 1}^{k}\tau_{\bar{K}_i} + \Box = \sum_{i = 1}^{k}\tau_{B_i} - \sum_{i = 1}^{k}\tau_{\bar{K}_i} + \Box
\end{align*}
and hence
\begin{align*}
\sum_{i = 1}^{k}\tau_{A_i} - \sum_{i = 1}^{k}\tau_{B_i} = B(0, \delta).
\end{align*}
Therefore (after another application of the ultrametric inequality) 
there exists $\xi_{A_i} \in A_i$ and $\xi_{B_i} \in B_i$ such that
\begin{align}\label{power}
|\sum_{i = 1}^{k}\xi_{A_i}^{j} - \sum_{i = 1}^{k}\xi_{B_i}^{j}| \leq \delta
\end{align}
for $j = 1, 2, \ldots, k$.

We now use the Newton-Girard identities to derive a contradiction.
For $j = 1, 2, \ldots, k$, define the power sums
$p_{j}(x_1, \ldots, x_k) := x_{1}^{j} + \cdots + x_{k}^{j}$.
Next for $j = 1, 2, \ldots, k$, define the elementary symmetric polynomials
$e_{j}(x_1, \ldots, x_k) := \sum_{1 \leq i_1 < \cdots < i_j \leq k}x_{i_1} \cdots x_{i_j}$. Additionally, let $e_{0}(x_1, \ldots, x_k) := 1$.
Then we have the two identities:
\begin{align}\label{expand}
(X - x_1)(X - x_2) \cdots (X - x_k) = \sum_{j = 0}^{k}(-1)^{j}e_{j}(x_1, \ldots, x_k)X^{k - j}
\end{align}
and for $j = 1, 2, \ldots, k$, we have
\begin{align}\label{newton}
je_{j}(x_1, \ldots, x_k) = \sum_{i = 0}^{j - 1}(-1)^{i}e_{j - i - 1}(x_1, \ldots, x_k)p_{i + 1}(x_1, \ldots, x_k).
\end{align}
See, for example, \cite[Lemma 15]{Tao254A} for a proof.

Let $e_{j}(A) := e_{j}(\xi_{A_1}, \ldots, \xi_{A_k})$
and $p_{j}(A) := p_{j}(\xi_{A_1}, \ldots, \xi_{A_k})$. Similarly
define $e_{j}(B)$ and $p_{j}(B)$.
By \eqref{expand}, we then have
\begin{align}\label{new1}
(\xi_{A_1} - \xi_{B_1}) \cdots (\xi_{A_1} - \xi_{B_k}) = \sum_{j = 0}^{k}(-1)^{j}e_{j}(B)\xi_{A_{1}}^{k - j}
\end{align}
and
\begin{align}\label{new2}
0 = (\xi_{A_1} - \xi_{A_1}) \cdots (\xi_{A_1} - \xi_{A_k}) = \sum_{j = 0}^{k}(-1)^{j}e_{j}(A)\xi_{A_1}^{k - j}.
\end{align}
Subtracting \eqref{new2} from \eqref{new1} and using that $|\xi_{A_1} - \xi_{B_j}| \geq q\kappa$
for any $j \neq 1$ (which follows from \eqref{distv1}) shows that
\begin{align}\label{difftarget}
(q\kappa)^{k - 1}|\xi_{A_1} - \xi_{B_1}| \leq |\sum_{j = 0}^{k}(-1)^{j}(e_{j}(B) - e_{j}(A))\xi_{A_1}^{k - j}| \leq \max_{j}|e_{j}(B) - e_{j}(A)|.
\end{align}
Next we claim that $|e_{j}(B) - e_{j}(A)| \leq \delta$ for all $j = 1, 2, \ldots, k$.
We prove this by induction. Since $e_{1} = p_{1}$, $|e_{1}(B) - e_{1}(A)| \leq \delta$
by the $j = 1$ case of \eqref{power}. Now assume that for some $J = 1, 2, \ldots, k - 1$ we had $|e_{j}(B) - e_{j}(A)| \leq \delta$
for all $j = 1, 2, \ldots, J$. Then by \eqref{newton},
\begin{align*}
|(J+1)e_{J + 1}(B) - (J+1)e_{J + 1}(A)| &= |\sum_{i = 0}^{J}(-1)^{i}(e_{J - i}(B)p_{i + 1}(B) - e_{J - i}(A)p_{i + 1}(A))|\\
& \leq \max_{0 \leq i \leq J}|e_{J - i}(B)p_{i + 1}(B) - e_{J - i}(A)p_{i + 1}(A)|.
\end{align*}
Observe that
\begin{align*}
|e_{J - i}(B)p_{i + 1}(B) - e_{J - i}(A)p_{i + 1}(A)|& = |e_{J - i}(B)(p_{i + 1}(B) - p_{i + 1}(A)) + p_{i + 1}(A)(e_{J - i}(B) - e_{J - i}(A))|\\
&\leq \max(|p_{i + 1}(B) - p_{i + 1}(A)|, |e_{J - i}(B) - e_{J - i}(A)|) \leq \delta
\end{align*}
by the inductive hypothesis and \eqref{power}. 
Since $q$ is a prime $> k$, it follows that $|e_{J + 1}(B) - e_{J + 1}(A)| \leq \delta$.

Applying this conclusion to \eqref{difftarget} then yields that
$(q\kappa)^{k - 1}|\xi_{A_1} - \xi_{B_1}| \leq \delta.$
But this contradicts the fact that $d(A_1, B_1) > (q\kappa)^{-(k - 1)}\delta$.
Therefore we must have $\#S(\bar{K}_1, \ldots, \bar{K}_{k}, \Box) \leq (q\kappa)^{-k(k - 1)}$ which completes the proof of the lemma.
\end{proof}

\section{Proof of Theorem \ref{thm:main} and Corollary \ref{maincor}}\label{mainproof}
\subsection{Dyadic pigeonholing}\label{pigeon}
It is more convenient to bound 
\begin{align}\label{supdec}
D_p(\delta) := \sup_{\delta_0 \in q^{-\N} \cap [\delta,1]} \mathfrak{D}_p(\delta_0)
\end{align}
instead of $\mathfrak{D}_p(\delta)$ as $D_{p}(\delta)$ is defined for all real $\delta \in (0, 1]$ (rather than just for $\delta \in q^{-\N}$) and is monotonic, that is, $D_{p}(\delta_L) \leq D_{p}(\delta_S)$ if $\delta_L \geq \delta_S$.

\begin{prop}
For even integers $p > 2k$, there exists a constant $C > 0$, depending only on $k$ and $p$, such that for every $0 < \varepsilon < 1$, we have
\begin{equation} \label{eq:D_iterate}
D_p(\delta)^p \leq C (\log \delta^{-1})^{3p} \left[ D_p(\delta^{1-\varepsilon})^p + q^{\frac{p}{2} + \frac{k^2 + 7k - 4}{2}}\delta^{-(k^2+4k-2) \varepsilon} \delta^{-\frac{1}{k}(\frac{p}{2}+\frac{k(k-3)}{2})} D_{p-2k}(\delta^{1-\frac{1}{k}})^{p-2k} \right]
\end{equation}
for all $0 < \delta < 1$.
\end{prop}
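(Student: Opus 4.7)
The strategy is to feed Lemma~\ref{lem:main} a near-extremizer of $\mf{D}_p(\delta_0)$ and use dyadic pigeonholing to turn the awkward right-hand side (featuring $L^p$, $L^\infty$ and $L^{p-2k}$ norms of pieces) into a clean bound in terms of $\mf{D}_p(\cdot)^p$ and $\mf{D}_{p-2k}(\cdot)^{p-2k}$. First I would pick $\delta_0 \in q^{-\N} \cap [\delta,1]$ with $\mf{D}_p(\delta_0) \geq \tfrac{1}{2}D_p(\delta)$ and a Schwartz $f$ with Fourier support in $\bigcup_{K \in P_{\delta_0}}\ta_K$ nearly extremizing $\mf{D}_p(\delta_0)$. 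Set $\kappa := q^{\lfloor \log_q \delta_0^{\vep}\rfloor}$ and $\nu := q^{\lfloor \log_q \delta_0^{1/k}\rfloor}$, so $\delta_0/\kappa \lsm_q \delta_0^{1-\vep}$ and $\delta_0/\nu \lsm_q \delta_0^{1-1/k}$.

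The main preparation is to replace $f$ by a subfunction $g = \sum_{K \in \mc{K}^*} f_K$ on which all of the quantities appearing in Lemma~\ref{lem:main} are dyadically constant across pieces. Concretely, I would pigeonhole three things: (i) $\nms{f_K}_p$ at a single dyadic level, (ii) $\nms{f_K}_\infty$ at a single dyadic level over $K$, and (iii) the cardinality $\#(\mc{K}^* \cap P_{\delta_0}(J))$ at a single dyadic level over $J \in P_\nu$. Each of these costs a factor $O(\log \delta^{-1})$, so $\nms{g}_p \geq C(\log \delta^{-1})^{-3}\nms{f}_p$ and hence $\nms{f}_p^p \lsm (\log \delta^{-1})^{3p}\nms{g}_p^p$. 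The point of this uniformity is that every step in Lemma~\ref{lem:reversed} (the per-piece H\"older $\nms{g_K}_p \leq \nms{g_K}_\infty^{2k/p}\nms{g_K}_{p-2k}^{1-2k/p}$, the elementary inequality $(\sum a^{2k/p}b^{2k/p}c^{2-4k/p})^{p/2}\leq (\max a)^k (\sum b)^k (\sum c^2)^{(p-2k)/2}$, and the union bound $\sum_K \leq N\max_J \sum_{K\in P_{\delta_0}(J)}$) becomes tight up to absolute constants once the summands are dyadically constant. Consequently, for the uniformized $g$ the estimate in Lemma~\ref{lem:reversed} reverses to
\[
\max_K \nms{g_K}_\infty^k \Big(\sum_{\bar K}\nms{g_{\bar K}}_\infty\Big)^k \max_J \Big(\sum_{K' \in P_{\delta_0}(J)}\nms{g_{K'}}_{p-2k}^2\Big)^{(p-2k)/2} \lsm N^{-(p-2k)/2}\Big(\sum_K \nms{g_K}_p^2\Big)^{p/2},
\]
where $N$ is the number of $J \in P_\nu$ with $g_J \ne 0$.

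Plugging this back into Lemma~\ref{lem:main} applied to $g$ collapses the $N^{p-2k}$ in front to $N^{(p-2k)/2}$. Using the ultrametric bounds $N \leq \nu^{-1} \leq q\delta_0^{-1/k}$, $\kappa^{-1} \leq q\delta_0^{-\vep}$, and $\nu^{-1} \leq q\delta_0^{-1/k}$, the remaining geometric prefactor $q^{-k(k-1)}\kappa^{-(k^2+4k-2)}\nu^{-k(k-1)/2}N^{(p-2k)/2}$ should evaluate to exactly $q^{p/2 + (k^2+7k-4)/2}\delta_0^{-\vep(k^2+4k-2)}\delta_0^{-\frac{1}{k}(\frac{p}{2}+\frac{k(k-3)}{2})}$, matching the statement. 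Dividing by $(\sum_K \nms{g_K}_p^2)^{p/2} \leq (\sum_K \nms{f_K}_p^2)^{p/2}$, reinstating the $(\log \delta^{-1})^{3p}$ pigeonhole loss, and upgrading $\mf{D}_p(\delta_0/\kappa) \leq D_p(\delta_0^{1-\vep}) \leq D_p(\delta^{1-\vep})$ and $\mf{D}_{p-2k}(\delta_0/\nu)\leq D_{p-2k}(\delta^{1-1/k})$ by monotonicity of $D_p$ and $D_{p-2k}$ together with $\delta_0 \geq \delta$ yields the bound on $\mf{D}_p(\delta_0)^p$; taking the supremum over $\delta_0 \in q^{-\N}\cap[\delta,1]$ delivers the bound on $D_p(\delta)^p$.

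The main subtlety I expect is verifying that three pigeonholings suffice to make Lemma~\ref{lem:reversed} reverse up to absolute constants; in particular, the $\nms{g_K}_{p-2k}$ norm is not pigeonholed explicitly, so one has to argue that the wavepacket structure of Lemma~\ref{wavepacket} (which says $|g_K|$ is constant on each $T \in \T(K)$) automatically pins $\nms{g_K}_{p-2k}$ to a dyadic level determined by $\nms{g_K}_p$ and $\nms{g_K}_\infty$, making H\"older on each $K$ tight. The remaining bookkeeping is routine but one has to track carefully the powers of $q$ contributed by the rounding defining $\kappa$ and $\nu$; these are what produce the precise exponent $q^{p/2+(k^2+7k-4)/2}$ in the statement.
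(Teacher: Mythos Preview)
There is a genuine gap in your pigeonholing scheme. Your three pigeonholings select a subcollection $\mc{K}^*$ of intervals but leave each $f_K$ untouched, and you then assert that the wavepacket structure ``automatically pins $\nms{g_K}_{p-2k}$ to a dyadic level determined by $\nms{g_K}_p$ and $\nms{g_K}_\infty$.'' This is false: $|g_K|$ is constant on each individual $T\in\T(K)$, but can vary across different $T$'s. Concretely, take $g_K$ with one wavepacket at height $H$ and $M^{p-2k}$ wavepackets at height $H/M$. Then $\nms{g_K}_\infty = H$ and $\nms{g_K}_p^p \sim VH^p$ (the tall packet dominates), but $\nms{g_K}_{p-2k}^{p-2k} \sim V M^{p-2k}(H/M)^{p-2k} \cdot M^{?}$---more precisely, with $m$ short packets at height $h$, one has $\nms{g_K}_p^p \approx VH^p$ provided $mh^p \ll H^p$, yet $\nms{g_K}_{p-2k}^{p-2k} \approx V(H^{p-2k}+mh^{p-2k})$ can be arbitrarily large relative to $VH^{p-2k}$ by choosing $m h^{p-2k}\gg H^{p-2k}$ while $mh^p\ll H^p$. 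So H\"older $\nms{g_K}_p\le \nms{g_K}_\infty^{2k/p}\nms{g_K}_{p-2k}^{1-2k/p}$ is \emph{not} reversible for your $g_K$, and the reversal of Lemma~\ref{lem:reversed} fails.

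The paper's fix is to pigeonhole at the wavepacket level rather than the norm level: after first localizing $f$ to a cube of side $\delta_0^{-k}$ (so that each $f_K$ has at most $\delta_0^{-k(k-1)/2}$ wavepackets), it decomposes each $f_K$ itself as $f_K=\sum_H f_K^{(H)}$ with $f_K^{(H)}:= f_K 1_{H/2<|f_K|\le H}$, then pigeonholes the number $\alpha$ of surviving wavepackets, and only then pigeonholes the sibling count $\beta$. The resulting pieces $g_K=f_K^{(H,\alpha,\beta)}$ have $|g_K|\in\{0\}\cup[H/2,H]$ on exactly $\sim\alpha$ wavepackets, so all of $\nms{g_K}_\infty$, $\nms{g_K}_p$, $\nms{g_K}_{p-2k}$ are computable as $\sim H$, $\sim (\alpha V)^{1/p}H$, $\sim (\alpha V)^{1/(p-2k)}H$ respectively, and the reversal of Lemma~\ref{lem:reversed} becomes an exact identity up to constants (this is the content of \eqref{pig1}--\eqref{pig4}). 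In short, your pigeonholing is on the wrong objects: you must refine each $f_K$ by wavepacket height and count, not merely sort the $K$'s by the sizes of $\nms{f_K}_p$ and $\nms{f_K}_\infty$.
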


\begin{proof}
To bound $D_p(\delta)^p$, suppose $0 < \delta < 1$ and $\delta_0 \in q^{\Z}$ with $\delta_0 \in [\delta,1]$. We need to bound $\mathfrak{D}_p(\delta_0)^p$ by decoupling down to frequency scale $\delta_0$. 

Let $f$ be a Schwartz function on $\F^k$ with Fourier support in $\bigcup_{K \in P_{\delta_0}} \theta_K$. Then $f = \sum_{K \in P_{\delta_0}} f_K$ where $\widehat{f_K} := \widehat{f} 1_{K \times \F^{k-1}}$. 
We want to prove the existence of $C > 0$ so that for any $0 < \vep < 1$,
\begin{equation} \label{eq:f_general}
\begin{split}
\int_{\F^k}&|f|^p \leq  C (\log \delta^{-1})^{3p}\times 
\\ & \left[ D_p(\delta^{1-\varepsilon})^p + q^{\frac{p}{2} + \frac{k^2 + 7k - 4}{2}}\delta^{-(k^2+4k-2) \varepsilon} \delta^{-\frac{1}{k}(\frac{p}{2}+\frac{k(k-3)}{2})} D_{p-2k}(\delta^{1-\frac{1}{k}})^{p-2k} \right]\Big( \sum_{K \in P_{\delta_0}} \|f_K\|_{L^{p}(\F^k)}^2 \Big)^{p/2}.
\end{split}
\end{equation}
In fact, we will prove that for any translate $Q$ of $B_{\delta_0^{-k}} := \{x \in \Q_q^k \colon |x| \leq \delta_0^{-k}\}$, we have
\begin{equation} \label{eq:f_compsupp}
\begin{split}
\int_{Q}&|f|^p \leq  C (\log \delta^{-1})^{3p} \times
\\ & \left[ D_p(\delta^{1-\varepsilon})^p + q^{\frac{p}{2} + \frac{k^2 + 7k - 4}{2}}\delta^{-(k^2+4k-2) \varepsilon} \delta^{-\frac{1}{k}(\frac{p}{2}+\frac{k(k-3)}{2})} D_{p-2k}(\delta^{1-\frac{1}{k}})^{p-2k} \right]\Big( \sum_{K \in P_{\delta_0}} \|f_K\|_{L^p(Q)}^2 \Big)^{p/2}.
\end{split}
\end{equation}
The estimate \eqref{eq:f_general} then follows by summing over all such $Q$'s that tile $\F^k$, and applying Minkowski's inequality to bring an $\ell^{p/2}$ norm over $Q$ on the right hand side into the sum over $K \in P_{\delta_0}$.

Thus we now turn to the proof of \eqref{eq:f_compsupp}. Note that for any translate $Q$ of $B_{\delta_0^{-k}}$, we have that
$\wh{1}_Q$ is supported in $B_{\delta_{0}^k}$. Therefore $f1_Q$ is still Fourier supported in $\bigcup_{K \in P_{\delta_0}}\ta_K$
since $\ta_K + B_{\delta_{0}^k} = \ta_K$ for all $K \in P_{\delta_0}$. 
Next, we have $(f 1_Q)_K = f_K 1_Q$; indeed 
\[
\widehat{(f 1_Q)_K} = \widehat{f 1_Q} 1_{K \times \F^{k-1}} = (\widehat{f} * \widehat{1_Q}) 1_{K \times \F^{k-1}} = (\widehat{f} 1_{K \times \F^{k-1}}) * \widehat{1_Q} = \widehat{f_K} * \widehat{1_Q}.
\]
As a result, to prove \eqref{eq:f_compsupp}, it suffices to prove \eqref{eq:f_general} under the additional assumption that $f$ is supported on $Q$. 
Since $f$ is an arbitrary Schwartz function with Fourier support in $\bigcup_{K \in P_{\delta_0}}\ta_K$, we may assume $Q = B_{\delta_0^{-k}}$. Thus from now on, we assume additionally that $f$ and all the $f_K$ are supported on $B_{\delta_0^{-k}}$ and prove \eqref{eq:f_general}.

We first dyadically pigeonhole $f$ by wavepacket height.
Write $H^* = \max_{K \in P_{\delta_0}} \|f_K\|_{L^{\infty}(\F^k)}$. For $K \in P_{\delta_0}$ and $H \in 2^{\Z}H^{\ast} \cap (\delta_0^{1+\frac{k(k-1)}{2p}} H^*, H^*]$, let
\[
f^{(H)}_K = f_K 1_{H/2 < |f_K| \leq H}
\]
where here $f_K: \F^k \rightarrow \C$, $|f_K|$ is the absolute value of $f_K$, and the last characteristic function is meant to be the indicator function of the set $\{x \in \F^k: H/2 < |f_{K}(x)| \leq H\}$.
Since $f_{K}$ is supported on $B_{\delta_{0}^{-k}}$, so is $f^{(H)}_{K}$.
By Lemma \ref{wavepacket}, since $f_K$ is Fourier supported in $\ta_K$, we
then have
\begin{align}\label{fhkdef}
f^{(H)}_{K} = (\sum_{T \in \T(K)}f_{K}1_{T})1_{H/2 < |f_K| \leq H} = \sum_{T \in \T(K)}(f_{K}1_{T})1_{H/2 < |f_{K}1_T| \leq H}
\end{align}
where the last equality is because $|f_{K}1_T|$ constant on every $T \in \T(K)$.
Again by Lemma \ref{wavepacket}, note that $f^{(H)}_{K}$
is Fourier supported in $\ta_K$.
Using the terminology of Lemma \ref{wavepacket}, the nonzero wavepackets
that make up $f^{(H)}_{K}$ are all of height $\sim H$.
Then
\begin{align*}
\|f-\sum_{K \in P_{\delta_0}} \sum_{H \in 2^{\Z}H^{\ast} \cap (\delta_0^{1+\frac{k(k-1)}{2p}} H^*, H^*]} f^{(H)}_K\|_{L^{\infty}(\F^k)}
&\leq \sum_{K \in P_{\delta_0}} \|f_K 1_{|f_K| \leq \delta_0^{1+\frac{k(k-1)}{2p}} H^*}\|_{L^{\infty}(\F^k)} \\
&\leq \delta_0^{-1} (\delta_0^{1+\frac{k(k-1)}{2p}} H^*) = \delta_0^{\frac{k(k-1)}{2p}} H^*
\end{align*}
so since $f$ and $f_{K}^{(H)}$ are supported on $B_{\delta_{0}^{-k}}$,
\begin{align*}
\|f-\sum_{K \in P_{\delta_0}} \sum_{H \in 2^{\Z}H^{\ast} \cap (\delta_0^{1+\frac{k(k-1)}{2p}} H^*, H^*]} f^{(H)}_K\|_{L^{p}(\F^k)} 
& \leq (\delta_0^{\frac{k(k-1)}{2p}} H^*) |B_{\delta_0^{-k}}|^{\frac{1}{p}} = H^* \delta_0^{-\frac{k(k+1)}{2p}} \\ & \leq \max_{K \in P_{\delta_0}} \|f_K\|_{L^{p}(\F^k)}
\leq (\sum_{K \in P_{\delta_0}} \|f_K\|_{L^{p}(\F^k)}^2)^{1/2}
\end{align*}
where the second inequality follows from writing $f_{K} = f_{K} \ast \reallywidecheck{1}_{\ta_K}$ and applying Young's inequality $\nms{f_K}_{L^{\infty}(\F^k)} \leq \nms{f_K}_{L^{p}(\F^k)}\nms{\reallywidecheck{1}_{\ta_K}}_{L^{p'}(\F^k)}$.
This shows
\[
\|f\|_{L^{p}(\F^k)} \leq \sum_{H \in 2^{\Z}H^{\ast} \cap (\delta_0^{1+\frac{k(k-1)}{2p}} H^*, H^*]} \|\sum_{K \in P_{\delta_0}} f^{(H)}_K \|_{L^{p}(\F^k)} + (\sum_{K \in P_{\delta_0}} \|f_K\|_{L^{p}(\F^k)}^2)^{1/2}.
\]

Next we dyadically pigeonhole so that each relevant $f_{K}^{(H)}$ is made
up of about the same number of wavepackets.
Let now $\nu = q^{\lfloor \log_q \delta_0^{1/k}\rfloor} \leq \delta_0^{1/k}$. 
From \eqref{fhkdef}, $f^{(H)}_{K}$ is Fourier supported in $\ta_K$
and supported in $B_{\delta_{0}^{-k}}$.
Since a $T \in \T(K)$ is either completely contained in or
completely disjoint from $B_{\delta_{0}^{-k}}$, 
we then can write
\begin{align}\label{fhkdef2}
f^{(H)}_{K} = \sum_{T \in \T(K), T \subset B_{\delta_{0}^{-k}}}(f_{K}1_T)1_{H/2 < |f_{K}1_T| \leq H}.
\end{align}
Furthermore, the $T \in \T(K)$ which are contained in $B_{\delta_{0}^{-k}}$
perfectly partition $B_{\delta_{0}^{-k}}$ into $\delta_{0}^{-k(k - 1)/2}$
many translates of $T_{0, k}$. 
Thus \eqref{fhkdef2} has at most $\delta_{0}^{-k(k - 1)/2}$ many nonzero terms.
Therefore
for $\alpha \in 2^{\N} \cap [1,\delta_0^{-k(k-1)/2}]$, let
\[
f^{(H,\alpha)}_K := f^{(H)}_K
\]
if the number of nonzero terms in \eqref{fhkdef2} (that
is, the number of nonzero wavepackets in $f^{(H)}_{K}$) is in $(\alpha/2, \alpha]$, and $0$ otherwise.
Thus now we have that
\begin{align}\label{numberdecompose}
f_{K}^{(H)} = \sum_{\alpha \in 2^{\N} \cap [1, \delta_{0}^{-k(k - 1)/2}]}f_{K}^{(H, \alpha)}
\end{align}
and each $f_{K}^{(H, \alpha)}$ is a function which is supported in $B_{\delta_{0}^{-k}}$
and Fourier supported in $\ta_K$ which has $\sim \alpha$
many nonzero wavepackets of height $\sim H$.

Finally, we dyadically pigeonhole so that given a $K$, the parent interval $J$
of length $\nu$ has about the same number of children $K'$ of length $\delta_0$
such that $f_{K'}^{(H, \alpha)} \neq 0$. 
To be more precise, fix a $K$ and let $J$ be the unique parent interval of length $\nu$
containing $K$.
This parent $J$ contains $\nu/\delta_0$ many intervals $K'$ of length $\delta_0$
and hence $J$ has at most $\nu/\delta_0$ many children $K'$ such that $f_{K'}^{(H, \alpha)} \neq 0$.
For $K \subset J$ and $\beta \in 2^{\N} \cap [1, \nu/\delta_0]$,
let
\[
f^{(H,\alpha,\beta)}_K := f^{(H,\alpha)}_K
\]
if the number of children $K''$ of $J$ with $f^{(H,\alpha)}_{K''} \ne 0$ is in $(\beta/2,\beta]$, that is, if $\#\{K'' \in P_{\delta_0}(J) : f_{K''}^{(H, \alpha)} \neq 0\} \in (\beta/2, \beta]$, and 0 otherwise. 
Thus we now have
\begin{align}\label{siblingdecompose}
f_{K}^{(H, \alpha)} = \sum_{\beta \in 2^{\N} \cap [1, \nu/\delta_0]}f_{K}^{(H, \alpha, \beta)}
\end{align}
and each $f_{K}^{(H, \alpha, \beta)}$ is a function which is supported in $B_{\delta_{0}^{-k}}$, Fourier supported in $\ta_K$, has $\sim \alpha$ many nonzero wavepackets
of height $\sim H$, and $K$'s parent $J$ has $\sim \beta$ children each of which
also are supported in $B_{\delta_{0}^{-k}}$, Fourier supported in $\ta_K$,
and have $\sim \alpha$ many nonzero wavepackets of height $\sim H$.

Thus combining \eqref{numberdecompose} and \eqref{siblingdecompose} gives
\[
f^{(H)}_K =  \sum_{\alpha \in 2^{\N} \cap [1,\delta_0^{-k(k-1)/2}]} \sum_{\beta \in 2^{\N} \cap [1, \nu/\delta_0]} f^{(H,\alpha,\beta)}_K 
\]
which implies
\begin{align*}
\|f\|_{L^{p}(\F^k)} \leq \sum_{H \in 2^{\Z}H^{\ast} \cap (\delta_0^{1+\frac{k(k-1)}{2p}} H^*, H^*]} \sum_{\alpha \in 2^{\N} \cap [1,\delta_0^{-k(k-1)/2}]} \sum_{\beta \in 2^{\N} \cap [1, \nu/\delta_0]} &\|\sum_{K \in P_{\delta_0}} f^{(H,\alpha,\beta)}_K \|_{L^{p}(\F^k)}\\
&\quad\quad + (\sum_{K \in P_{\delta_0}} \|f_K\|_{L^{p}(\F^k)}^2)^{1/2}.
\end{align*}

Fix now $\varepsilon > 0$. For each of the $\lsm (\log \delta_{0}^{-1})^3$ choices of $(H,\alpha,\beta)$, we apply Lemma~\ref{lem:main} with $\delta$ replaced by $\delta_0$, to 
\begin{align}\label{gexplicit}
g := \sum_{K \in P_{\delta_0}} f^{(H,\alpha,\beta)}_K = \sum_{J \in P_{\nu}^{(H, \alpha, \beta)}}\sum_{K \in P_{\delta_0}(J)}f_{K}^{(H, \alpha)}
\end{align}
where
\begin{align*}
P_{\nu}^{(H, \alpha, \beta)} = \{J \in P_{\nu} : \#\{K'' \in P_{\delta_0}(J) : f_{K''}^{(H, \alpha)} \neq 0\} \in (\beta/2, \beta]\}
\end{align*}
and $\kappa := q^{\lfloor \log_q \delta_0^{\varepsilon} \rfloor} \leq \delta_0^{\varepsilon}$.
Note that this implies
\begin{align}\label{gjexplicit}
g_{J} = 1_{P_{\nu}^{(H, \alpha, \beta)}}(J)\sum_{K \in P_{\delta_0}(J)}f_{K}^{(H, \alpha)}
\end{align}
and $g_K = f_{K}^{(H, \alpha)}$ if $K$'s parent $J$ is contained in $P_{\nu}^{(H, \alpha, \beta)}$ and 0 otherwise.
Write $N$ for the number of $J \in P_{\nu}$ for which $g_J \ne 0$ as in Lemma~\ref{lem:main}, and so $N = \# P_{\nu}^{(H, \alpha, \beta)}$.
Note that by assumption the number of nonzero terms in the $\sum_{K}$ in \eqref{gjexplicit} is $\sim \beta$.


With this, we then first compute
\begin{align}\label{pig1}
\max_{K \in P_{\delta_0}}\nms{g_K}_{L^{\infty}(\F^k)}^{k} \sim H^{k}
\end{align}
since $g_K = f^{(H, \alpha, \beta)}_{K}$ which has height $\sim H$.
Next, we have
\begin{align}
\begin{aligned}\label{pig2}
(\sum_{\bar{K} \in P_{\delta_0}}\nms{g_{\bar{K}}}_{L^{\infty}(\F^k)})^{k} &= (\sum_{J \in P_{\nu}}\sum_{\bar{K} \in P_{\delta_0}(J)}\nms{g_{\bar{K}}}_{L^{\infty}(\F^k)})^{k}\\
&= (\sum_{J \in P_{\nu}^{(H, \alpha, \beta)}}\sum_{\bar{K} \in P_{\delta_0}(J)}\nms{g_{\bar{K}}}_{L^{\infty}(\F^k)})^{k} \sim (N\beta H)^{k}
\end{aligned}
\end{align}
since there are $N$ such $J$ for which $g_{J} \neq 0$ and by how $g$ is defined, each of these $J$'s that contribute has $\sim \beta$ children $\bar{K}$ such that $g_{\bar{K}} = f_{\bar{K}}^{(H, \alpha)} \neq 0$. We can finish this estimate once again by using that
$g_{\bar{K}}$ has height $\sim H$.
Third,
\begin{align}\label{pig3}
\max_{J \in P_{\nu}}(\sum_{K \in P_{\delta_0}(J)}\nms{g_K}_{L^{p - 2k}(\F^k)}^{2})^{(p - 2k)/2} &= \max_{J \in P_{\nu}}(\sum_{K \in P_{\delta_0}(J)}\nms{g_K}_{L^{p - 2k}(B_{\delta_{0}^{-k}})}^{2})^{(p - 2k)/2}\nonumber\\
&\sim_{p, k} \beta^{(p - 2k)/2}H^{p - 2k}\alpha \delta_{0}^{-k(k + 1)/2}
\end{align}
since by how $g$ is defined, the $\sum_{K \in P_{\delta_0}(J)}$
has $\sim \beta$ terms and each term is made up of $\sim \alpha$ wavepackets
of height $\sim H$. Note here we made use that each $T \in \T(K)$ has volume
$\delta_{0}^{-k(k + 1)/2}$ and $g_K$ is supported on $B_{\delta_{0}^{-k}}$.
Finally, a similar computation gives that
\begin{align}\label{pig4}
(\sum_{K \in P_{\delta_0}}\nms{g_K}_{L^{p}(\F^k)}^{2})^{p/2} = (\sum_{J \in P_{\nu}}\sum_{K \in P_{\delta_0}(J)}\nms{g_K}_{L^{p}(B_{\delta_{0}^{-k}})}^{2})^{p/2} \sim_{p, k} (N\beta)^{p/2}H^{p}\alpha\delta_{0}^{-k(k + 1)/2}.
\end{align}
Combining \eqref{pig1}-\eqref{pig4} gives that
\begin{align*}
\max_{K \in P_{\delta_0}}\nms{g_K}_{L^{\infty}(\F^k)}^{k}(\sum_{\bar{K} \in P_{\delta_0}}\nms{g_{\bar{K}}}_{L^{\infty}(\F^k)})^{k}\max_{J \in P_{\nu}}(\sum_{K \in P_{\delta_0}(J)}&\nms{g_K}_{L^{p - 2k}(\F^k)}^{2})^{(p - 2k)/2}\\ &\sim_{p, k} N^{-(p - 2k)/2}(\sum_{K \in P_{\delta_0}}\nms{g_K}_{L^{p}(\F^k)}^{2})^{p/2}.
\end{align*}
Using this with Lemma \ref{lem:main} where $g$ is as given in \eqref{gexplicit}, then shows that
\begin{align*}
\int_{\F^k}|g|^p 
\leq \,  C \mathfrak{D}_p(\frac{\delta_0}{\kappa})^p &\Big( \sum_{K \in P_{\delta_0}} \|g_K\|_{L^{p}(\F^k)}^2 \Big)^{p/2} \\ & + C q^{-k(k - 1)}\kappa^{-(k^2+4k-2)} \nu^{-\frac{k(k-1)}{2}} N^{\frac{p-2k}{2}} \mathfrak{D}_{p-2k}(\frac{\delta_0}{\nu})^{p-2k} \Big( \sum_{K \in P_{\delta_0}} \|g_K\|_{L^{p}(\F^k)}^2 \Big)^{p/2}.
\end{align*}
Note $\frac{\delta_0}{\kappa} \geq \frac{\delta_0}{\delta_0^{\varepsilon}} = \delta_0^{1-\varepsilon} \geq \delta^{1-\varepsilon}$, so $\mathfrak{D}_{p}(\frac{\delta_0}{\kappa}) \leq D_{p}(\frac{\delta_0}{\kappa}) \leq D_{p}(\delta^{1-\varepsilon})$ where in the second inequality we have used monotonicity. Similarly, $\frac{\delta_0}{\nu} \geq \frac{\delta_0}{\delta_0^{1/k}} = \delta_0^{1-\frac{1}{k}} \geq \delta^{1-\frac{1}{k}}$, so $\mathfrak{D}_{p-2k}(\frac{\delta_0}{\nu}) \leq D_{p-2k}(\delta^{1-\frac{1}{k}})$.
As a result,
\begin{align*}
\int_{\F^k}|g|^p \leq \,  C &D_p(\delta^{1-\varepsilon})^p \Big( \sum_{K \in P_{\delta_0}} \|g_K\|_{L^{p}(\F^k)}^2 \Big)^{p/2} \\ & + C q^{-k(k - 1)}\kappa^{-(k^2+4k-2)} \nu^{-\frac{k(k-1)}{2}} N^{\frac{p-2k}{2}} D_{p-2k}(\delta^{1-\frac{1}{k}})^{p-2k} \Big( \sum_{K \in P_{\delta_0}} \|g_K\|_{L^{p}(\F^k)}^2 \Big)^{p/2}.
\end{align*}
Now use $N \leq \nu^{-1}$ and $\|g_K\|_{L^{p}(\F^k)} = \|f^{(H,\alpha,\beta)}_K\|_{L^{p}(\F^k)} \leq \|f_K\|_{L^{p}(\F^k)}$. Thus
\begin{align*}
\int_{\F^k}|f|^p&\leq C (\log \delta^{-1})^{3p} \times\\
 &\left[ D_p(\delta^{1-\varepsilon})^p + q^{-k(k - 1)}\kappa^{-(k^2+4k-2)} \nu^{-(\frac{p}{2}+\frac{k(k-3)}{2})} D_{p-2k}(\delta^{1-\frac{1}{k}})^{p-2k} \right] \Big( \sum_{K \in P_{\delta_0}} \|f_K\|_{L^{p}(\F^k)}^2 \Big)^{p/2}.
\end{align*}
But $\nu^{-1} \leq q \delta_0^{-1/k} \leq q \delta^{-1/k}$ and $\kappa^{-1} \leq q \delta_0^{-\varepsilon} \leq q \delta^{-\varepsilon}$. 
This completes the proof of \eqref{eq:D_iterate}.
\end{proof}

\subsection{Proof of Theorem~\ref{thm:main}}
We now finish the proof of Theorem \ref{thm:main}.
It suffices to iterate \eqref{eq:D_iterate} by using an induction on $p$ and induction on $\delta$. Applying the definition of $D_{p}(\delta)$ from \eqref{supdec} and the
hypothesis of Theorem \ref{thm:main} gives that
\begin{align*}
D_{p_0}(\delta)^{p_0} \leq C_{1}\delta^{-(\frac{p_0}{2} - \frac{k(k + 1)}{2}) - c(p_0)(1 - \frac{1}{k})^{p_{0}/(2k)}}
\end{align*}
for all $\delta \in (0, 1)$ and some $c(p_0) \geq 0$ such that the power of $\delta^{-1}$ is nonnegative.
Note that from \eqref{adef}, $a(p, p_0) = a(p - 2k, p_0) + \frac{p}{2} + \frac{k^2 + 7k - 4}{2}$ and $a(p_0, p_0) = 0$.
Additionally, \eqref{positiveexp} gives $b(p_0) \geq 0$ where
\[
b(p):= (p-k(k+1))(1 - \frac{1}{k})^{-\frac{p}{2k}} + 2 c(p_0),
\]
and $b(p)$ is an increasing function of $p$ on $[2,\infty)$: indeed, for $p \geq 2$, we have
\begin{align*}
 b'(p) 
 &= (1-\frac{1}{k})^{-\frac{p}{2k}} [ 1 + \frac{p-k(k+1)}{2k} \log(1-\frac{1}{k})^{-1}]\\
 &= (1-\frac{1}{k})^{-\frac{p}{2k}} [ 1 + \frac{p-k(k+1)}{2k} (\log k - \log(k-1))]\\
 &\geq (1-\frac{1}{k})^{-\frac{p}{2k}} [ 1 + \frac{2-k(k+1)}{2k} (\log k - \log(k-1))]\\
 &\geq (1-\frac{1}{k})^{-\frac{p}{2k}} [ 1 + \frac{2-k(k+1)}{2k} \frac{1}{k-1}],
\end{align*}
where we used $p \geq 2$ in the first inequality, and used $\log k - \log(k-1) \leq \frac{1}{k-1}$ with $2 - k(k+1) \leq 0$ in the second inequality.
This gives 
\[
b'(p) \geq (1-\frac{1}{k})^{-\frac{p}{2k}} [ 1 + \frac{2-k-k^2}{2k(k-1)}] = (1-\frac{1}{k})^{-\frac{p}{2k}} [ 1 - \frac{k+2}{2k}] \geq 0
\]
since $k \geq 2$, proving that $b(p)$ is an increasing function of $p$ on $[2,\infty)$. As a result, from $b(p_0) \geq 0$, we see that $b(p) \geq 0$ for all $p \geq p_0$, and hence 
\begin{align}\label{pineq}
\frac{p}{2} - \frac{k(k + 1)}{2} + c(p_0)(1 - \frac{1}{k})^{\frac{p}{2k}} \geq 0
\end{align}
for all $p \geq p_0$.

Assume for every $0 < \vep < 1$ and all $\delta \in (0, 1)$ we know
\begin{align*}
D_{p - 2k}(\delta)^{p - 2k} \leq C_{p - 2k, \vep}q^{a(p - 2k, p_0)}\delta^{-(\frac{p - 2k}{2} - \frac{k(k + 1)}{2}) - c(p_0)(1 - \frac{1}{k})^{\frac{p- 2k}{2k}} - \vep}
\end{align*}
for some $p \in p_0 + 2k\N$ (this is true for $p = p_0 + 2k$) and $C_{p - 2k, \vep}$ is allowed to depend on $C_1$. Then \eqref{eq:D_iterate} gives
\begin{align*}
D_{p}(\delta)^{p} &\leq C(\log \delta^{-1})^{3p}\Big[D_{p}(\delta^{1 - \vep})^{p}\\
&\quad+C_{p - 2k, \vep}q^{a(p, p_0)}\delta^{-(k^2 + 4k - 2)\vep}\delta^{-\frac{1}{k}(\frac{p}{2} + \frac{k(k - 3)}{2})}\delta^{-(1 - \frac{1}{k})(\frac{p - 2k}{2} - \frac{k(k + 1)}{2}) - c(p_0)(1 - \frac{1}{k})^{\frac{p - 2k}{2k} + 1} - \vep}\Big]\\
&= C(\log \delta^{-1})^{3p}D_{p}(\delta^{1 - \vep})^{p} + CC_{p - 2k, \vep}q^{a(p, p_0)}\delta^{-(\frac{p}{2} - \frac{k(k + 1)}{2}) - c(p_0)(1 - \frac{1}{k})^{\frac{p}{2k}} - (k^2 + 4k)\vep}
\end{align*}
for all $\delta, \vep \in (0, 1)$ where $C$ here depends only on $k$ and $p$.
Iterating this inequality $M$ times with $M$ to be chosen later gives that
\begin{align*}
D_{p}(\delta)^{p} &\leq C^{M}(\log \delta^{-1})^{3Mp}D_{p}(\delta^{(1 - \vep)^{M}})^{p}\nonumber\\
&\quad\quad\quad + CC_{p - 2k, \vep}q^{a(p, p_0)}\delta^{-(k^2 + 4k)\vep}\sum_{j = 0}^{M - 1}C^{j}(\log \delta^{-1})^{3pj}\delta^{-(1 - \vep)^{j}[(\frac{p}{2} - \frac{k(k + 1)}{2}) + c(p_0)(1 - \frac{1}{k})^{\frac{p}{2k}}]}.
\end{align*}
Trivially, we have $D_{p}(\delta^{(1 - \vep)^{M}}) \leq\delta^{-(1 - \vep)^{M}/2}$. Thus
\begin{align}\label{iterationM}
\begin{aligned}
D_{p}(\delta)^{p} &\leq C^{M}(\log \delta^{-1})^{3Mp}\delta^{-(1 - \vep)^{M}p/2}\\
&+ CC_{p - 2k, \vep}q^{a(p, p_0)}\delta^{-(k^2 + 4k)\vep}\sum_{j = 0}^{M - 1}C^{j}(\log \delta^{-1})^{3pj}\delta^{-(1 - \vep)^{j}[(\frac{p}{2} - \frac{k(k + 1)}{2}) + c(p_0)(1 - \frac{1}{k})^{\frac{p}{2k}}]}.
\end{aligned}
\end{align}
By \eqref{pineq}, the power of $\delta^{-1}$ in \eqref{iterationM}
is positive and so using that $(1 - \vep)^{j} \leq 1$, the sum can
be controlled by
$$M C^{M}(\log \delta^{-1})^{3Mp}\delta^{-(\frac{p}{2} - \frac{k(k + 1)}{2}) - c(p_0)(1 - \frac{1}{k})^{\frac{p}{2k}}}.$$
Inserting this into \eqref{iterationM} and choosing
$M$ be the least integer such that $(1 - \vep)^{M} \leq \vep$ (and so $M = \lceil \frac{\log \vep^{-1}}{\log (1 - \vep)^{-1}} \rceil$) then shows that
\begin{align*}
D_{p}(\delta) \lsm_{p, \vep, C_{1}} q^{a(p, p_0)/p} 
(\log \delta^{-1})^{3M} \delta^{-(\frac{1}{2} - \frac{k(k + 1)}{2p}) - \frac{c(p_0)}{p}(1 - \frac{1}{k})^{\frac{p}{2k}} - \frac{(k^2 + 4k) \vep}{p}}
\end{align*}
for all $\delta, \vep \in (0, 1)$. Since $(\log \delta^{-1})^{3M} \lesssim_{\vep} \delta^{-\vep}$, by redefining $\vep$ we have 
\begin{align*}
D_{p}(\delta) \lsm_{p, \vep, C_{1}} q^{a(p, p_0)/p}  \delta^{-(\frac{1}{2} - \frac{k(k + 1)}{2p}) - \frac{c(p_0)}{p}(1 - \frac{1}{k})^{\frac{p}{2k}} - \vep}.
\end{align*}

\appendix

\section{Proof of $\mathfrak{D}_{2k}(\delta) \lsm_{\vep} \delta^{-\vep}$}

Fix $k \in \N$ and a prime $q > k$. For $\delta \in q^{-\N}$, let $S(\delta)$ be the smallest constant such that the reverse square function estimate
\[
\int_{\F^k} |g|^{2k} \leq S(\delta)^{2k} \int_{\F^k} (\sum_{K \in P_{\delta}} |g_K|^2)^{k}
\]
holds for every Schwartz function $g$ on $\F^k$ with Fourier transform supported in $\bigcup_{K \in P_{\delta}} \theta_K$. We will prove that 
\[
S(\delta) \lesssim_{\vep} \delta^{-\vep}
\]
for every $\vep > 0$, which by Minkowski's inequality is stronger than the assertion $\mathfrak{D}_{2k}(\delta) \lesssim_{\vep} \delta^{-\vep}$.

Let $\delta \in q^{-\N}$, $g$ be as above, and $\kappa \in q^{-\N} \cap [\delta,1]$. The broad/narrow dichotomy given by the pointwise estimate \eqref{eq:g_pointwise} implies
\begin{equation} \label{eq:BG}
\int_{\F^k} |g|^{2k} \leq 2^{2k-1} k^{2k} \sum_{I \in P_{\kappa}} \int_{\F^k} |g_I|^{2k} + 2^{2k-1} \kappa^{-(4k-2)} \sum_{\substack{I_1, \dots, I_k \in P_{\kappa} \\ d(I_i,I_j) > \kappa \, \forall i \ne j}} \int_{\F^k} |g_{I_1} \dots g_{I_k}|^2
\end{equation}
Furthermore, by a rescaling argument similar to that in Lemma~\ref{rescale}, we have
\begin{align}
\sum_{I \in P_{\kappa}} \int_{\F^k}  |g_I|^{2k}
&\leq S(\frac{\delta}{\kappa})^{2k} \sum_{I \in P_{\kappa}}
\int_{\F^k} (\sum_{K \in P_{\delta}(I)} |g_K|^2)^k \leq S(\frac{\delta}{\kappa})^{2k} \int_{\F^k} (\sum_{K \in P_{\delta}} |g_K|^2)^k \label{eq:narrow}
\end{align}
where we used the pointwise inequality $\sum_{I \in P_{\kappa}}  (\sum_{K \in P_{\delta}(I)} |g_K|^2)^k \leq  (\sum_{K \in P_{\delta}} |g_K|^2)^k$ in the last inequality.
To proceed further, fix now $I_1, \dots, I_k \in P_{\kappa}$ with $d(I_i,I_j) > \kappa$ for all $i \ne j$. We expand
\[
\int_{\F^k} |g_{I_1} \dots g_{I_k}|^2 
= \sum_{\substack{K_i \in P_{\delta}(I_i) \\ i=1,\dots,k}} \sum_{\substack{\bar{K}_j \in P_{\delta}(I_j) \\ j=1,\dots,k}} \int_{\F^k}g_{K_1} \dots g_{K_k} \overline{g_{\bar{K}_1}} \dots \overline{g_{\bar{K}_k}}
\]
and write
\[
\int_{\F^k}g_{K_1} \dots g_{K_k} \overline{g_{\bar{K}_1}} \dots \overline{g_{\bar{K}_k}}
=  [\widehat{g_{K_1}}*\dots*\widehat{g_{K_k}}*\gkbbh](0).
\]
For each $\bar{K}_1 \in P_{\delta}(I_1), \ldots, \bar{K}_k \in P_{\delta}(I_k)$, we count the number of ordered $k$-tuples $(K_1,\dots,K_k)$ with $K_i \in P_{\delta}(I_i)$ for $i = 1,\dots,k$ and $0 \in \supp(\widehat{g_{K_1}}*\dots*\widehat{g_{K_k}}*\gkbbh)$. The proof of Lemma~\ref{lem:counting} shows that the number of such ordered $k$-tuples is $\leq (q \kappa)^{-k(k-1)}$ (in fact, here we only need that $\widehat{g_{K_j}}$ is supported in the cube $\tau_{K_j}$ rather than the smaller parallelepiped $\theta_{K_j}$). So using Cauchy-Schwarz,
\begin{align*}
\sum_{\substack{K_i \in P_{\delta}(I_i) \\ i=1,\dots,k}} \sum_{\substack{\bar{K}_j \in P_{\delta}(I_j) \\ j=1,\dots,k}} \int_{\F^k}g_{K_1} \dots g_{K_k} \overline{g_{\bar{K}_1}} \dots \overline{g_{\bar{K}_k}} 
&\leq (q \kappa)^{-k(k-1)} \sum_{\substack{K_i \in P_{\delta}(I_i) \\ i=1,\dots,k}} \int_{\F^k} |g_{K_1} \dots g_{K_k}|^2.
\end{align*}
It follows that 
\begin{align}
\sum_{\substack{I_1, \dots, I_k \in P_{\kappa} \\ d(I_i,I_j) > \kappa \, \forall i \ne j}} \int_{\F^k} |g_{I_1} \dots g_{I_k}|^2
&\leq (q \kappa)^{-k(k-1)} \int_{\F^k} \Big(\sum_{K \in P_{\delta}} |g_K|^2 \Big)^k.
\label{eq:broad}
\end{align}
Alternatively, multilinear restriction estimate and $L^2$ orthogonality says that for any ball $B_{\delta^{-1}}$ of radius $\delta^{-1}$ in $\F^k$, one has
\[
\int_{B_{\delta^{-1}}} |g_{I_1} \dots g_{I_k}|^2 \lesssim_{\kappa} (\delta^{k-1})^k \prod_{j=1}^k \int_{B_{\delta^{-1}}} |g_{I_j}|^2 = |B_{\delta^{-1}}|^{-(k-1)} \prod_{j=1}^k \int_{B_{\delta^{-1}}} \Big(\sum_{K_j \in P_{\delta}(I_j)} |g_{K_j}|^2\Big),
\]
and since each $|g_{K_j}|$ is constant on $B_{\delta}^{-1}$, we have 
\[
|B_{\delta^{-1}}|^{-(k-1)} \prod_{j=1}^k \int_{B_{\delta^{-1}}} \Big(\sum_{K_j \in P_{\delta}(I_j)} |g_{K_j}|^2\Big) = \int_{B_{\delta^{-1}}} \prod_{j=1}^k  \Big(\sum_{K_j \in P_{\delta}(I_j)} |g_{K_j}|^2\Big).
\]
Summing over all $B_{\delta^{-1}} \subset \F^k$ and all $I_1, \dots, I_k \in P_{\kappa}$, we have
\[
\sum_{\substack{I_1, \dots, I_k \in P_{\kappa} \\ d(I_i,I_j) > \kappa \, \forall i \ne j}} \int_{\F^k} |g_{I_1} \dots g_{I_k}|^2 \lesssim_{\kappa} \int_{\F^k} \Big(\sum_{K \in P_{\delta}} |g_K|^2 \Big)^k,
\]
which for the purposes below is as good as \eqref{eq:broad}.
Putting \eqref{eq:narrow} and \eqref{eq:broad} back into \eqref{eq:BG}, we have
\[
S(\delta)^{2k} \leq 2^{2k-1} k^{2k} S(\frac{\delta}{\kappa})^{2k} + 2^{2k-1} \kappa^{-(4k-2)} (q \kappa)^{-k(k-1)}.
\]
Iterating this gives
\[
S(\delta)^{2k} \leq (2^{2k-1} k^{2k})^{N} S(\frac{\delta}{\kappa^N})^{2k} + N 2^{2k-1} \kappa^{-(4k-2)} (q \kappa)^{-k(k-1)}
\]
for all positive integers $N$ for which $\kappa^N \geq \delta$; in particular, applying this with $N = \lfloor \frac{ \log \delta^{-1} }{\log \kappa^{-1}} \rfloor$, and noting that $S(\delta/\kappa^N) \leq (\delta/\kappa^N)^{-1/2} \leq \kappa^{-1/2}$, we have
\[
S(\delta)^{2k} \leq \delta^{-\frac{\log (2^{2k-1} k^{2k})}{\log \kappa^{-1}}} \kappa^{-k} + \frac{\log \delta^{-1}}{\log \kappa^{-1}} 2^{2k-1} \kappa^{-(4k-2)} (q \kappa)^{-k(k-1)}.
\]
By choosing $\kappa = \kappa(\vep)$ sufficiently small so that $\frac{\log (2^{2k-1} k^{2k})}{\log \kappa^{-1}} \leq 2k \vep$, one obtains $S(\delta) \lesssim_{\vep} \delta^{-\vep}$, as desired.

\bibliographystyle{amsplain}
\bibliography{oldVMVT}
\end{document}